\documentclass[11pt,a4paper]{article}
\usepackage{amsfonts,amssymb,amsmath,amscd,stmaryrd,latexsym,makeidx,theorem,psfrag}
 \setlength{\voffset}{-0.5 cm} 
 \setlength{\textwidth}{15.0cm} \setlength{\textheight}{22.5cm}
 \addtolength{\hoffset}{-1.2cm}

\author{Luca Martinazzi\thanks{This work was supported by the Swiss National Fond Grant no. PBEZP2-129520.}\\ \small{Centro di Ricerca Matematica Ennio De Giorgi} \\
\small{Scuola Normale Superiore, Pisa} \\ \footnotesize{\texttt{luca.martinazzi@sns.it}}}
\title{A note on $n$-axially symmetric harmonic maps from $B^3$ to $S^2$ minimizing the relaxed energy}
\date{November 25, 2010 $\qquad$ Revised version: May 5, 2011}

\newtheorem{trm}{Theorem}
\newtheorem{prop}[trm]{Proposition}

\newtheorem{lemma}[trm]{Lemma}
\newtheorem{defin}{Definition}

\newcommand{\R}[1]{\mathbb{R}^{#1}}

\newcommand{\de}{\partial}
\newcommand{\ve}{\varepsilon}

\newcommand{\M}[1]{\mathcal{#1}}
\newcommand{\mass}{\mathbf{M}}
\newcommand{\cart}{\mathrm{cart}^{2,1}}

\newcommand{\cur}[1]{\left\llbracket{#1}\right\rrbracket}
\newcommand{\res}{\rule[-0pt]{.5pt}{7pt} \rule[-0pt]{6pt}{.5pt}}
\newenvironment{proof}{\noindent\emph{Proof.}}{\phantom{ } \hfill$\square$\medskip}

\DeclareMathOperator{\supp}{supp}

\DeclareMathOperator{\loc}{loc}

\begin{document}
\maketitle

\begin{abstract} For any $n\ge 2$ we provide an explicit example of an $n$-axially symmetric map $u\in H^1(B_2,S^2)\cap C^0(\overline B_2\setminus \overline B_1)$, where $B_r=\{p\in\R{3}:|p|<r\}$, with $\deg u|_{\de B^2}=0$, ``strictly minimizing in $B_1$'' the relaxed Dirichlet energy of Bethuel, Brezis and Coron $$F(u,B_2):=\frac{1}{2}\int_{B_2}|\nabla u|^2dxdydz+4\pi \Sigma(u,B_2),$$
and having $\Sigma(u,B_2)>0$ , $u|_{B_1}\not\equiv const$. Here $\Sigma(u,B_2)$ is (in a generalized sense) the lenght of a minimal connection joining the topological singularities of $u$. By ``strictly minimizing in $B_1$'' we intend that $F(u,B_2)<F(v,B_2)$ for every $v\in H^1(B_2,S^2)$ with $v|_{B_2\setminus B_1}=u|_{B_2\setminus B_1}$ and $v\not\equiv u$.

This result, which we shall also rephrase in terms of Cartesian currents (following Giaquinta, Modica and Sou\v cek) stands in sharp contrast with a results of Hardt, F-H. Lin and Poon for the case $n=1$, and partially answers a long standing question of Giaquinta, Modica and Sou\v cek. In particular it is a first example of a minimizer of the relaxed energy having non-trivial minimal connection. We explain how this relates to the regularity of minimizers of $F$.
\end{abstract}

\section{Introduction}

\subsection*{The relaxed energy of Bethuel, Brezis and Coron}

Consider a map $u\in H^1(B_2,S^2)=\{v\in H^{1}(B_2,\R{3}):|v|=1\,a.e.\}$ such that $u|_{\de B_2}\in C^0(\de B_2,S^2)\cap H^1(\de B_2,S^2)$ and $\deg(u|_{\de B_2})=0$. The relaxed Dirichlet energy of $u$ was introduced by Bethuel, Brezis and Coron \cite{bbc} as
$$F(u,B_2) :=\frac{1}{2}\int_{B_2}|\nabla u|^2 dxdydz+4\pi \Sigma(u,B_2),$$
with
$$\Sigma(u,B_2):=\frac{1}{4\pi}\sup_{\substack {\xi:B_2\to \R{} \\ \|\nabla \xi\|_\infty\le 1} }\bigg\{\int_{B_2} \mathbf{D}(u)\cdot \nabla \xi dxdydz - \int_{\de B_2} \mathbf{D}(u)\cdot \nu \xi d\M{H}^2\bigg\},$$
(here $\nu(p)=\frac{p}{|p|}$ is the outward unit normal to $\de B_2$) and
$$\mathbf{D}(u):=\bigg(u\cdot \frac{\de u}{\de y}\wedge \frac{\de u}{\de z},u\cdot \frac{\de u}{\de z}\wedge \frac{\de u}{\de x},u\cdot \frac{\de u}{\de x}\wedge \frac{\de u}{\de y}\bigg).$$
The term $\Sigma(u,B_2)$ is a generalization of the idea of minimal connection, already studied by Brezis, Coron and Lieb \cite{bcl} in the sense that if $u$ is smooth away from finitely many points $\{P_i, N_i: 1\le i\le k\}\subset B_2$  and for $\ve$ small one has $\deg u|_{\de B_\ve(P_i)}=1$ and $\deg u|_{\de B_\ve(N_i)}=-1$ then
\begin{equation}\label{minconn}
\Sigma(u,B_2)=\min_{\sigma\in S_k}\sum_{i=1}^k |P_i- N_{\sigma(i)}|,\quad S_k:=\big\{\text{Permutations of }\{1,2,\ldots, k\}\big\},
\end{equation}
see also \cite[p. 37-38]{bbc}. 
As proven in \cite[Thms. 2 - 3]{bbc}, $F$ is the relaxation in the sense of Lebesgue of the Dirichlet energy $D(u,B_2):=\frac{1}{2}\int_{B_2} |\nabla u|^2 dxdydz$, i.e. given $u\in H^1(B_2,S^2)$ as above we have
$$F(u,B_2)=\inf\Big\{\liminf_{k\to\infty} D(u_k,B_2): u_k  \rightharpoonup u\text{ in }H^1, \, u_k\in H^1\cap C^0(\overline B_2,S^2),\, u_k|_{\de B_2}= u|_{\de B_2}\Big\}.$$
In particular $F$  is sequentially weakly lower semicontinuous in $H^1(B_2,S^2)$ in the sense that
$$u_k\rightharpoonup u\text{ in }H^1(B_2,S^2)\text{ and }u_k|_{\de B_2}=u|_{\de B_2} \quad \Rightarrow\quad F(u,B_2)\le\liminf_{k\to\infty}F(u_k,B_2).$$
\begin{defin}\label{min1} Given $u\in H^1(B_2,S^2)$ with $u|_{\de B_2}\in H^1\cap C^0(\de B_2,S^2)$ and $\deg u|_{\de B_2}=0$ we say that $u$ minimizes $F$ in $B_1$ if $F(u,B_2)\le F(v,B_2)$ for every $v \in H^1(B_2,S^2)$ with $v=u$ in $B_2\setminus B_1.$
\end{defin}

An immediate consequence of the semicontinuity of $F$ is that given $\varphi \in H^1(B_2,S^2)$ with $\varphi|_{\de B_2}\in H^1\cap C^0(\de B^2,S^2)$ and $\deg \varphi|_{\de B_2}=0$ we can always find a minimizer $u\in H^1(B_2,S^2)$ of $F$ in $B_1$ with $u=\varphi$ in $B_2\setminus B_1$.

Understanding the regularity of such a minimizer is instead a more subtle and \emph{widely open} problem, to which we want to contribute in this paper. Before doing that, we will recall the approach of Giaquinta, Modica and Sou\v cek to the relaxed energy.

\subsection*{The relaxed energy of Giaquinta, Modica and Sou\v cek}

Later  Giaquinta, Modica and Sou\v cek \cite{gmsb} introduced a different way of relaxing the Dirichlet energy, in the context of Cartesian currents.
Given a map $u\in H^{1}(B_2,S^2)$ and a $1$-dimensional integer multiplicity rectifiable current $L$ in $B_2$, we shall say that the current (in $B_2\times S^2\subset \R{6}$) $T:=\M{G}(u)+L\times \cur{S^2}$
is a Cartesian current if
\begin{equation}\label{bordo0}
\partial \M{G}(u)=-\partial L\times \cur{S^2}\quad \text{in } B_2\times S^2, 
\end{equation}
where $\M{G}(u)=\cur{\{(p,u(p))\in B_2\times S^2:p\in B_2\}}$ denotes the $3$-dimensional current given by integration over the graph of $u$, see \cite{GMS2}. Following \cite{gmsa}, \cite{gmsb} and \cite{gms} we call $\cart (B_2,S^2)$ the set of such currents and set for $T$ as above
$$\M{D}(T,B_2):=\frac{1}{2}\int_{B_2}|\nabla u|^2dxdydz+4\pi\mass(L),$$
where $\mass(L)$ denotes the mass of $L$.
As proven in \cite[Theorem 2]{gms}, $\M{D}$ is the relaxed Dirichlet energy, in the sense that if $\varphi\in C^\infty(B_2, S^2)$, $T\in \cart (B_2,S^2)$ and $T\res ((B_2\setminus \overline{B}_1)\times S^2)=\M{G}(\varphi|_{B_2\setminus \overline{B}_1})$, then there exists a sequence of functions $u_k\in C^{\infty}(B_2,S^2)$ with $$u_k=\varphi\text{ in }B_2\setminus \overline{B}_1,\quad  \M{G}(u_k)\rightharpoonup T\text{ weakly as currents},\quad \frac{1}{2}\int_{B_2} |\nabla u_k|^2dx \to \M{D}(T,B_2).$$
Moreover $\M{D}(\cdot,B_2)$ is sequentially lower semicontinuous with respect to the weak convergence of currents in $\cart (B_2,S^2)$.
\begin{defin}\label{min2}
We say that $T\in \cart (B_2, S^2)$ is a minimizer of $\M{D}$ in $B_1$ if $\M{D}(T,B_2)\le \M{D}(\tilde T,B_2)$ for every $\tilde T\in \cart (B_2, S^2)$ such that   $T\res ((B_2\setminus \overline B_1) \times S^2)=\tilde T\res ((B_2\setminus \overline B_1) \times S^2)$.
\end{defin}

Again semicontinuity of $\M{D}$ implies that for any $T\in \cart (B_2, S^2)$ there exists a minimizer $T_0$ of $\M{D}$ in $B_1$ with $T_0\res(B_2\setminus \overline B_1)\times S^2=T\res(B_2\setminus \overline B_1)\times S^2$.

\medskip

The relation between $\M{D}$ and $F$ was studied in \cite{gms}: Given $u\in H^1(B_2,S^2)$ with $u|_{\de B_2}$ smooth and of degree $0$, there exists a $1$-dimensional integer multiplicity rectifiable current $L$ in $B_2$ which minimizes $\mass(L)$ among the i.m. rectifiable currents satisfying \eqref{bordo0} and $(\de L)\res \de B_2=0$. Moreover $\mass (L)=\Sigma(u,B_2)$.
Therefore $F(u,B_2)=\M{D}(\M{G}(u)+L\times \cur{S^2},B_2)$. In this sense, the current $L$ generalizes the notion of minimal connection of Brezis, Coron and Lieb and $\mass(L)$ provides a natural extension of the length of a minimal connection given by \eqref{minconn}.

An important difference between $F$ and $\M{D}$ is that $F(\cdot,B_2)$ depends only on $u$, but the term $\Sigma(u,B_2)$ is \emph{non-local.} The definition of $\M{D}(\cdot, B_2)$ is local instead, but it depends on the couple $(u,L)$ and not on $u$ only. In order to discuss regularity issues, this second definition turns out to be more convenient because regularity is a local notion. On the other hand, the above considerations show that the two approaches are basically equivalent. In particular if $\M{G}(u)+L\times \cur{S^2}\in \cart(B_2,S^2)$ is a minimizer of $\M{D}$ in $B_1$ in the sense of Definition \ref{min2} with $\supp L\Subset B_2$, $u|_{\de B_2}\in H^1\cap C^0(\de B_2,S^2)$ and $\deg u|_{\de B_2}=0$, then $u$ is a minimizer of $F$ in $B_1$ in the sense of Definition \ref{min1} and conversely, if $u$ is a minimizer of $F$ in $B_1$, then $\M{G}(u)+L\times \cur{S^2}$ is a minimizer of $\M{D}$ in $B_1$ if we choose $L$ minimal under conditions \eqref{bordo0} and $(\de L)\res \de B_2=0$. In both cases $F(u,B_2)=\M{D}(\M{G}(u)+L\times \cur{S^2},B_2)$.

\subsection*{The regularity of minimizers and our example}

Remember that Schoen and Uhlenbeck \cite{su} proved that a  map $u\in H^1(B_2,S^2)$ minimizing the Dirichlet energy $D$ in $B_1$ (in the sense of Definition \ref{min1} with $D$ instead of $F$) is smooth in $B_1$ away from a discrete set (see also \cite{lu}). Their result is sharp as shown by Hardt and F-H. Lin \cite{hl}, who constructed minimizers of $D$ with singular sets finite but arbitrarily large. The theorem of Schoen and Uhlenbeck cannot be applied to the present situation since minimizers of $F$ are not necessarily minimizers of the Dirichlet energy. 

Using a monotonicity formula Giaquinta, Modica and Sou\v cek \cite{gms} proved that if $T=\M{G}(u)+L\times \cur{S^2}\in \cart (B_2, S^2)$ is a  minimizer of $\M{D}$ in $B_1$, then the support of $L\res B_1$ has Hausdorff dimension at most $1$. It is easy to see that $u|_{B_1}$ is a \emph{stationary} harmonic map away from $\supp (L\res B_1)$, and from a theorem of Evans \cite{evans} it follows that $u$ is smooth away from a set of dimension at most $1$. While this result is much weaker than the one of Schoen and Uhlenbeck, we remark that to our knowledge no example has been so far provided of a minimizer of $F$ having singularities (contrary to the case of the Dirichlet energy, where we have the examples of \cite{hl}).

\medskip

In fact Hardt, F-H. Lin and Poon \cite{hlp} were able to give a complete regularity theory for the functional $F$ restricted to the class of \emph{axially symmetric} maps.  A map $u\in H^1(B_2,S^2)$ is said to be $n$-axially symmetric (or simply axially symmetric if $n=1$) if
$$u(r,\theta,z)=(\cos(n\theta) \sin(\varphi(r,z)),\sin(n\theta) \sin(\varphi(r,z)),\cos(\varphi(r,z))),$$
where $(r,\theta,z)$ are cylindrical coordinates in $\R{3}$ and $\varphi$ is a function which determines $u$ completely (compare \cite{hkl}).
Similarly an $n$-axially symmetric Cartesian current in $B_2\times S^2$ will be a current of the form $T=\M{G}(u)+L\times \cur{S^2}\in \cart (B_2, S^2)$, where $u$ is $n$-axially symmetric, the support of $L$ is a subset of the $z-axis$ and its multiplicity at each point is an integer multiple of $n$. We shall call $\M{A}^{(n)}(B_2,S^2)$ the set of such currents.

Hardt, Lin and Poon studied the case $n=1$ and proved (among many other things) that any $T=\M{G}(u)+L\times\cur{S^2}\in \M{A}^{(1)}(B_2,S^2)$ minimizing $\M{D}$ in $B_1$ among axially symmetric currents has $L\res B_1=0$ unless $u|_{B_1}\equiv const$, and from this they deduced that the singular set of  $u|_{B_1}$ is a discrete subset of $z-axis\cap B_1$. (This result is sharp in that they also gave examples where the minimizers \emph{must} have singularities, but we remark that these are minimizers among axially symmetric currents and not among all currents.)  Their clever proof strongly relies on a dipole construction  \cite[Lemma 7.1]{hlp}: assuming that $L\res B_1\neq 0$, they can remove a piece of $L\res B_1$, replace it with a ``dipole'' similar to those introduced in \cite{bcl}, and prove that some energy could be saved, contradicting minimality.

\medskip

Both in Giaquinta, Modica and Sou\v cek's and in Hardt, Lin and Poon's regularity results, proving smallness of the vertical part $L\res B_1$ is crucial, and this suggests the following strategy to prove regularity of a minimizer $u$ of $F$ in $B_1$ (in the sense of Definition \ref{min1}):
\begin{enumerate}
\item Fix $L$ ($1$-d i.m. rectifiable current as above) minimal satisfying \eqref{bordo0} and $(\de L)\res \de B_2=0$ and consider $\M{G}(u)+L\times \cur{S^2}$, which is now a minimizer of $\M{D}$ in $B_1$ in the sense of Definition \ref{min2}.
\item Prove that $L\res B_1=0$ using a generalization of the dipole construction of \cite{bcl} and \cite{hlp} as follows. Assume  that $L\res B_1\neq 0$ and for simplicity that $L\res B_1$ contains a straight segment and that $u$ around this segment behaves almost like an $n$-axially symmetric map; then remove a part of this segment and modify $u$ is the spirit of \cite[Lemma 7.1]{hlp} (for instance using the refined dipole construction of \cite{riviere}) reducing the energy but still preserving condition \eqref{bordo0}, contradiction.
\item $L\res B_1=0$ implies that $u$ is stationary in $B_1$, hence Evans' result implies that $u$ is smooth away from a set of $\M{H}^1$-measure $0$.
\item If possible prove even more regularity for $u$.
\end{enumerate}

In this work we show that the above project fundamentally fails at step 2 because a generalization of the dipole construction of \cite{hlp} to the $n$-axially symmetric case with $n\ge 2$ is impossible! This is an immediate consequence of Theorem \ref{trm1} below.
Define for $\alpha>0$
\begin{equation}\label{defT0}
T_0:= \M{G}(u_0)+ L_0\times \cur{S^2}\in \M{A}^{(n)} (B_2, S^2),
\end{equation}
where
\begin{equation}\label{defT0b}
u_0(r,\theta,z):=\Pi^{-1}\big(\alpha r^n(\cos(n \theta),\sin(n\theta))\big)\in C^\infty(\overline B_2,S^2),\quad L_0:=-n\cur{z-axis}\res B_2.
\end{equation}
Here $\Pi:S^2=\{(x,y,z)\in\R{3}:x^2+y^2+z^2=1\}\to\R{2}\cup\{\infty\}$ is the stereographic projection, given by
$$\Pi(x,y,z)=\frac{(x,y)}{1+z},$$
and $\cur{z-axis}$ is the current given by integration along the $z-axis=\{(0,0,z):z\in\R{}\},$
with orientation set up so that, setting $u_\ve:B_2\to S^2$ as
$$u_\ve(r,\theta,z):=\left\{
\begin{array}{ll}
\Pi^{-1}\Big(\alpha r^{n}\big(\cos(n\theta),\sin(n\theta)\big)\Big)& \text{for }r\ge \ve \\
\Pi^{-1}\Big(\alpha\ve^{2n} r^{-n}\big(\cos(n\theta),\sin(n\theta)\big)\Big)& \text{for }r\le \ve,\rule{0cm}{.7cm}
\end{array}
\right.$$
one has $\M{G}(u_\ve)\in \M{A}^{(n)}(B_2,S^2)$ and as $\ve\downarrow 0$ we have $\M{G}(u_\ve)\rightharpoonup T_0$ as currents in $B_2\times S^2$.

\begin{trm}\label{trm1} For any $n\geq 2$ there is $0<\alpha_0\le \tfrac{1}{4}$ such that for all $\alpha\in [0,\alpha_0]$ the current $T_0$ defined in \eqref{defT0}-\eqref{defT0b} is the unique minimizer of $\M{D}(\cdot,B_2)$ in
$$\M{A}^{(n)}_{T_0}:=\big\{T\in \M{A}^{(n)}(B_2, S^2 ) : T\res ((B_2\backslash \overline{B_1})\times S^2)=T_0\res ((B_2\backslash \overline{B_1})\times S^2) \big\}.$$
\end{trm}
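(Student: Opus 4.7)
The plan is to exploit the $n$-axial symmetry to reduce the minimization on $\M{A}^{(n)}_{T_0}$ to a scalar variational problem on the closed half-disk $D_1^+ := \{(r,z)\in\R{2}:r\geq 0,\ r^2+z^2\leq 1\}$. Any competitor $T = \M{G}(u) + L \times \cur{S^2} \in \M{A}^{(n)}_{T_0}$ is determined by an axial angle $\varphi\colon D_1^+\to\mathbb{R}$ (via $u = (\cos(n\theta)\sin\varphi,\sin(n\theta)\sin\varphi,\cos\varphi)$) satisfying the Dirichlet boundary condition $\varphi = 2\arctan(\alpha r^n)$ on the circular arc $\{r^2+z^2 = 1,\, r>0\}$ and taking values in $\pi\mathbb{Z}$ on the axial segment, together with an integer multiplicity $m\colon [-1,1]\to n\mathbb{Z}$ for $L\res B_1$. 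Writing $k(z) := \varphi(0,z)/\pi$, the Cartesian-current condition \eqref{bordo0} forces $m(z) = n(k(z)-1)$, matching $k=0$, $m=-n$ just outside $B_1$. The functional to minimize therefore reduces to
\begin{equation*}
E[\varphi]
= \pi\int\!\!\int_{D_1^+}\!\Bigl(r\varphi_r^2 + r\varphi_z^2 + \frac{n^2\sin^2\varphi}{r}\Bigr)\,dr\,dz
+ 4\pi n\int_{-1}^1\!|k(z)-1|\,dz,
\end{equation*}
whose value at $(\varphi_0,k_0\equiv 0)$ is $\M{D}(T_0\res B_1) = D(u_0,B_1) + 8\pi n$.

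The basic slice-wise inequality, obtained by applying $a^2 + b^2 \geq 2|ab|$ with $a = \sqrt{r}\,\varphi_r$ and $b = n\sin\varphi/\sqrt{r}$, reads
\begin{equation*}
\int_0^{r_{\max}(z)}\!\Bigl(r\varphi_r^2 + \frac{n^2\sin^2\varphi}{r}\Bigr)\,dr
\geq 2n\,\mathrm{TV}_{[0,r_{\max}(z)]}(\cos\varphi(\cdot,z)),
\end{equation*}
with equality iff $r\varphi_r = \pm n\sin\varphi$, equivalently $\tan(\varphi/2) = c\,r^{\pm n}$. Coupled with $\cos\varphi(0,z) = (-1)^{k(z)}$ and $\cos\varphi(r_{\max}(z),z) = C(z) := (1-\alpha^2(1-z^2)^n)/(1+\alpha^2(1-z^2)^n)$, the right-hand side becomes an explicit function of $k(z)$. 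In the topologically trivial case $k \equiv 0$, this slice bound together with the (vanishing) mass correction reproduces exactly $\M{D}(T_0\res B_1)$, and equality forces $\tan(\varphi/2)=\alpha r^n$, hence $\varphi = \varphi_0$ and $T = T_0$.

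The heart of the proof is the case when $k$ takes nontrivial integer values on a set of positive measure. There the slice bound plus mass term actually \emph{undershoots} $\M{D}(T_0\res B_1)$ by a quantity of order $\alpha^2$: for instance, when $k\equiv 1$ on $(-1,1)$ one finds slice bound $+\ 4\pi n\int|k-1|\,dz = 2\pi n\int_{-1}^1(1+C(z))\,dz$, which lies below $\M{D}(T_0\res B_1)$ by exactly $4\pi n\int_{-1}^1(1-C(z))\,dz = O(\alpha^2)$. The missing energy must be extracted from the $r\varphi_z^2$-term, invisible to the slice bound. My key quantitative input is that near-saturators of the slice inequality on a $k\equiv 1$ interval must locally coincide with the ``hedgehog'' profile $\tan(\varphi/2) = \alpha(1-z^2)^n r^{-n}$, and an explicit calculation (via the substitution $\tau = r^{2n}/[\alpha(1-z^2)^n]^2$, which localizes the integrand near $r \sim \alpha^{1/n}(1-z^2)^{1/2}$) yields
\begin{equation*}
\pi\int\!\!\int_{D_1^+}\!r\varphi_z^2\,dr\,dz \gtrsim c_n\,\alpha^{2/n}.
\end{equation*}
The scaling asymmetry $2/n \leq 1 < 2$ valid for $n\geq 2$ means $\alpha^{2/n}\gg\alpha^2$ as $\alpha\downarrow 0$, so this hidden $z$-derivative energy strictly dominates the slice-bound deficit once $\alpha\leq\alpha_0$ is sufficiently small (this comparison determines the threshold $\alpha_0 \leq 1/4$). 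A routine decomposition over intervals of constancy of $k$ extends the argument to general $k$-profiles, and simultaneous saturation of all inequalities forces $k\equiv 0$ and $\varphi=\varphi_0$, giving uniqueness.

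The main obstacle I expect is promoting the lower bound on $\int\!\int r\varphi_z^2$ from the near-saturator regime (where it follows from the explicit hedgehog calculation) to \emph{all} admissible $\varphi$ with the prescribed $k$-profile; this requires a genuinely two-dimensional variational argument on $D_1^+$ rather than a slice-wise one, and is precisely where the threshold $n \geq 2$ is essential, since the scaling gap $\alpha^{2/n}$ vs.\ $\alpha^2$ collapses at $n = 1$, consistent with Hardt--Lin--Poon's opposite conclusion in that regime.
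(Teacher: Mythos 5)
Your reduction to the half-disk, the slice-wise total-variation inequality (which is the paper's \eqref{E>A} combined with Lemma \ref{lemmaarea}), and the scaling heuristic --- $z$-derivative energy of order $\alpha^{2/n}$ for the exact equivariant hedgehog versus a slice-bound deficit of order $\alpha^2$, with the gap collapsing at $n=1$ --- are all correct and do capture why $n\ge 2$ matters. However, the step you yourself flag as the ``main obstacle'' is a genuine gap, and the inequality you propose to close it with, $\iint r\varphi_z^2\,dr\,dz\gtrsim c_n\alpha^{2/n}$ for every admissible $\varphi$ with nontrivial $k$-profile, is false as stated. A competitor with $k\ne 0$ need not resemble the hedgehog: in the cylinder take $\widehat u=f(r,z)=\max\bigl(\alpha r^n,\beta(z)r^{-n}\bigr)$ with $\beta=\gamma^n$, $\gamma=\ve\phi$ for a fixed bump $\phi$ compactly supported in $(-1,1)$ and $\ve$ arbitrarily small. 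Each slice is piecewise conformal with slice minimum $a(z)=\sqrt{\alpha\beta(z)}$, so the maximal gain over $T_0$ (ignoring the $z$-derivative) is $\int 8\pi n\,a(z)^2\,dz\lesssim\alpha\ve^n$, while the $z$-derivative energy is $\sim\int\bigl((\beta^{1/n})'\bigr)^2dz\sim\ve^2\ll\alpha^{2/n}$. Hence no lower bound on the $z$-energy depending only on $\alpha$ and the $k$-profile can hold; the comparison must be localized in the slice minima $a(z)$ (in this family it still closes, since $\ve^2\gg\alpha\ve^n$ for $n\ge2$ and $\alpha$ small, but that is exactly the general estimate you have not supplied, and a purely slice-wise argument cannot supply it).

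For comparison, the paper closes the argument by an inverted mechanism: it never proves a lower bound on the $z$-energy. After choosing a slice $z_1$ that nearly maximizes the putative gain $\psi(z)$, minimality gives an \emph{upper} bound $\frac12\int_\Sigma|\de u/\de z|^2\le 32\pi n a^2/(1+a^2)$ (Lemma \ref{lemma0}); this forces the radius $s$ at which the profile first reaches height $1/2$ to satisfy $s\le C_0a$ (Lemma \ref{lemmasigma}); and the contradiction then comes from the \emph{radial} conformality defect, $(E-A)\ge\pi n^2a^2\log(\tau_0/t_0)$ with $t_0/\tau_0\le C(\alpha a^{n-2})^{1/n}\to0$ (Proposition \ref{lemmakey}), an excess of size $a^2/o(1)$ that beats the gain $8\pi na^2/(1+a^2)$. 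This logarithmically large radial defect is entirely absent from your accounting, which attributes all the hidden energy to $r\varphi_z^2$. Two smaller points: your structural reductions (that $\varphi(0,z)/\pi$ is a well-defined integer-valued function, that the multiplicity of $L$ is slaved to it, and the a.e.\ radial limits of the profile) are asserted rather than proved --- the paper needs Lemmas \ref{IT}--\ref{lemma2} for this, and in particular $k$ takes only the values $0$ and $1$; and your threshold ``$\alpha_0\le 1/4$ determined by the comparison $\alpha^{2/n}$ vs.\ $\alpha^2$'' misidentifies where $1/4$ enters (in the paper it is the a priori bound $f\le\alpha r^n\le 1/4$ on the boundary slice used in Lemma \ref{lemmasigma}).
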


Lemma 7.1 of \cite{hlp} implies at once that for $n=1$ our current $T_0$ is \emph{not} minimizing in $\M{A}^{(1)}_{T_0}$, and if this Lemma could be generalized to the case $n\ge 2$ it would contradict Theorem \ref{trm1}.
The fundamental difference between the cases $n=1$ and $n\ge 2$ is that when $n=1$,  for any minimizer $T=\M{G}(u)+L\times \cur{S^2}\in\M{A}^{(1)}(B_2,S^2)$ of $\M{D}$ in $B_1$ one has that $\widetilde \nabla u:=(\de u/\de x, \de u/\de y)$ cannot vanish indentically on open subsets of the $z-axis\cap B_1$ (\cite[Lemma 7.3]{hlp}), and at points in $\supp L\cap B_1$ with $\widetilde \nabla u\ne 0$ one can remove a piece of $L$ and of the original map and, replacing them with the dipole constructed in \cite[Lemma 7.1]{hlp} (compare with \cite[Section III]{bcl}) one saves an energy proportional to $|\widetilde \nabla u|^2$ (compare also \cite{bc}), hence producing a new current in $\M{A}^{(1)}_T(B_2,S^2)$ with smaller energy, contradicting the minimality of $T$. 
In our example $\widetilde \nabla u_0 \equiv 0$ on the $z$-axis and the expected energy gain due to the dipole replacement is smaller than the energy necessary to glue the dipole to the original map.

Coming back to Step 2 of the regularity program outlined above, if $L\res B_1\neq 0$ contains a segment and $\nabla u$ vanishes along this segment (an occurrence very difficult to rule out in general), then we can expect to be essentially in the situation of Theorem \ref{trm1} and we cannot use minimality to get a contradiction. This remark shows that in order to prove regularity of minimizers of $F$ (or of $\M{D}$) one has to work close to the topological singularities of $u$, i.e. close to $\supp\de L$, and not in the ``interior'' of the minimal connection ($\supp L\setminus \supp \de L$), which might prove very challenging.

\subsection*{Statement of Theorem \ref{trm1} in terms of the $F$ energy}

Theorem \ref{trm1} can be reformulated in terms of the $F$ energy as follows. Define the cones
$$C^+:=\{(r,\theta,z)\in B_2: z>1,\;0\le r<z-1\},\quad C^-:=-C^+=\{p\in \R{3}:-p\in C^+\}$$
and set $\tilde u_0:=u_0$ on $\Omega\setminus(C^+\cup C^-)$, where $u_0$ is as in \eqref{defT0b}. On $C^+$ we define
$$\tilde u_0(r,\theta,z):=\Pi^{-1}\big(\alpha (z-1)^{2n} r^{-n}(\cos(n\theta),\sin(n\theta))\big). $$
On $C^-$ we set $\tilde u_0(r,\theta,z):= \tilde u_0(r,\theta,-z)$. This way
$$\tilde u_0\in H^1(B_2,S^2)\cap C^0(\overline B_2\setminus\{(0,0,\pm 1)\})\cap H^1(\de B_2,S^2)$$
and
$$\deg \tilde u_0|_{\de B_2}=0, \quad\deg \tilde u_0|_{\de B_{1/2}(0,0,\pm 1)}=\mp n$$
(this construction was inspired by the dipole of \cite[Section III]{bcl} and a conversation with H. Brezis).  Theorem \ref{trm1} is essentially equivalent to the following.
\begin{trm}\label{trm2} The map $\tilde u_0$ minimizes $F(\cdot, B_2)$ in the set
$$\M{A}_{\tilde u_0}^{(n)}=\{u\in H^1(B_2,S^2): u\text{ is $n$-axially symmetric and } u=\tilde u_0\text{ in }B_2\setminus B_1\}.$$
\end{trm}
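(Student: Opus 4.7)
The plan is to reduce Theorem \ref{trm2} to Theorem \ref{trm1} by transplanting competitors. To any $u \in \M{A}^{(n)}_{\tilde u_0}$ I would associate a current $T_u \in \M{A}^{(n)}_{T_0}$ with
\begin{equation*}
\M{D}(T_u, B_2) = F(u, B_2) + K,
\end{equation*}
where $K$ is a constant depending only on $\tilde u_0$. The construction will satisfy $T_u = T_0$ when $u = \tilde u_0$, giving $\M{D}(T_0,B_2) = F(\tilde u_0,B_2) + K$; Theorem \ref{trm1} then yields $F(\tilde u_0,B_2) \le F(u, B_2)$ at once.

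The transplant is the evident one. Define $v : B_2 \to S^2$ by $v := u$ on $B_1$ and $v := u_0$ on $B_2\setminus B_1$; since $\overline{C^+\cup C^-}\cap \partial B_1 = \emptyset$ we have $\tilde u_0 = u_0$ on $\partial B_1$, so $v \in H^1(B_2, S^2)$ is $n$-axially symmetric with $v = u_0$ on $B_2\setminus\overline B_1$. Let $L_u$ be a mass-minimizing $n$-axially symmetric integer multiplicity rectifiable $1$-current in $B_2$ satisfying $\partial \M{G}(u) = -\partial L_u\times\cur{S^2}$ and $(\partial L_u)\res\partial B_2 = 0$, so that $\mass(L_u) = \Sigma(u,B_2)$ by \cite{gms}; by minimality and the fact that all topological singularities of $u$ lie in $\overline B_1$, $L_u$ is supported in $\overline B_1$. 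Set $L_v := L_u + L_0\res(B_2\setminus\overline B_1)$ and $T_u := \M{G}(v) + L_v\times\cur{S^2}$.

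The energy identity is then direct: since $v=u$ on $B_1$ and $v = u_0 = \tilde u_0$ off $C^+\cup C^-$, and since $L_u$ is disjoint from $L_0\res(B_2\setminus\overline B_1)$ with $\mass(L_0\res(B_2\setminus\overline B_1)) = 2n$,
\begin{equation*}
\M{D}(T_u,B_2) - F(u,B_2) = \tfrac12 \int_{C^+\cup C^-}\bigl(|\nabla u_0|^2 - |\nabla\tilde u_0|^2\bigr) + 8\pi n =: K,
\end{equation*}
manifestly independent of $u$. When $u = \tilde u_0$, $v = u_0$ throughout $B_2$; and with the orientation of $L_0$ pinned down by the approximation $\M{G}(u_\varepsilon)\rightharpoonup T_0$ from the introduction, the minimizer $L_{\tilde u_0}$ (the segment from $(0,0,-1)$ to $(0,0,1)$ on the z-axis with multiplicity $n$) equals $L_0\res\overline B_1$, so $L_v = L_0$ and $T_u = T_0$.

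The main technical obstacle is verifying $T_u \in \cart^{2,1}(B_2, S^2)$, i.e.\ $\partial \M{G}(v) = -\partial L_v\times \cur{S^2}$. Subtracting the analogous identity for $u$ reduces this to the dipole identity
\begin{equation*}
\partial\bigl(\M{G}(u_0) - \M{G}(\tilde u_0)\bigr) = -\partial\bigl(L_0\res (B_2\setminus\overline B_1)\bigr)\times \cur{S^2},
\end{equation*}
both sides supported in $(C^+\cup C^-)\times S^2$. On $C^+$ this asserts that passing from $\tilde u_0$, with its degree $-n$ topological singularity at $(0,0,1)$, to the smooth map $u_0$ replaces a point singularity by the line singularity along the z-axis from $(0,0,1)$ to $\partial B_2$ with multiplicity $n$---precisely the content, localized to $C^+$, of the weak convergence $\M{G}(u_\varepsilon)\rightharpoonup T_0$ recalled in the introduction; the situation on $C^-$ is analogous by reflection.
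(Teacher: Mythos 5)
Your proposal is correct and is essentially the paper's own argument: both reduce Theorem \ref{trm2} to Theorem \ref{trm1} by attaching a mass-minimizing axially symmetric connection $L_u$ and swapping the data on $(B_2\setminus\overline B_1)\times S^2$ for that of $T_0$, with the boundary verification resting on the same dipole identity $\de\M{G}(\tilde u_0|_{B_2\setminus\overline B_1})=\de(T_0\res(B_2\setminus\overline B_1)\times S^2)$ in $B_2\times S^2$. The only (immaterial) differences are that the paper runs the argument on a minimizer of $F$ and invokes the uniqueness in Theorem \ref{trm1}, whereas you transplant every competitor and compare energies through the constant $K$, and that $\overline{C^+\cup C^-}$ actually meets $\de B_1$ at the two vertices $(0,0,\pm1)$ --- a null set that does not affect your gluing.
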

Notice that $\Sigma(\tilde u_0)=2$ (the minimal connection joining the singular points $(0,0,\pm1)$ goes all the way from $(0,0,-1)$ to $(0,0,1)$), while in the case $n=1$ the result of Hardt, Lin and Poon implies that $\tilde u_0$ is not a minimizer and that a minimizer $u$ is smooth in $\overline B_1\setminus \{(0,0,\pm1)\}$ by a simple extention of \cite[Thm. 8.2]{hlp} and $u|_{\overline B_1}$ has singularities at $(0,0,\pm1)$ of degree $\pm1$ which ``topologically'' cancel the singularities of $\tilde u_0|_{B_2\setminus B_1}$ in the sense that (recalling that $u|_{\de B_1}=\tilde u_0|_{\de B_1}\in C^0(\de B_1,S^2)$)
$$\deg u|_{\de (B_{1/2}((0,0,\pm 1))\cap B_1)}=\pm 1,\quad \deg u|_{\de (B_{1/2}((0,0,\pm 1))\setminus B_1)}=\mp 1,\quad \deg u|_{\de B_{1/2}((0,0,\pm 1))}=0$$
and $\Sigma(u)=0$.

\subsection*{Some notation and formulas}

For an open set $\Omega\subset\R{2}$ and a function $u\in W^{1,2}(\Omega,S^2)$ we set $\widehat u=\Pi\circ u$ and we define the Dirichlet energy
\begin{equation}\label{formula1}
E(u,\Omega):=\frac{1}{2}\int_{\Omega} |\nabla u|^2dxdy=2\int_\Omega \frac{|\nabla \widehat u|^2}{(1+|\widehat u|^2)^2}dxdy,
\end{equation} 
and the area counted with multiplicity
\begin{equation}\label{formula2}
A(u,\Omega):=\int_\Omega |Ju|dxdy=4\int_{\Omega}\frac{|J\widehat u|}{(1+|\widehat u|^2)^2}dxdy,
\end{equation} 
where $Ju$ denotes the Jacobian determinant of $u$. Since $|\nabla u|^2\ge 2|Ju|$ one has
\begin{equation}\label{E>A}
E(u,\Omega)\ge A(u,\Omega),
\end{equation}
with equality holding if and only if $u$ is conformal.

Assume now that $\Omega=D_s:=\{(x,y)\in\R{2}: x^2+y^2< s^2\}$
and $u$ is $n$-axially symmetric, i.e. for a function $f:[0,s]\to \overline{\R{}}$ we can write in polar coordinates
\begin{equation}\label{fu}
u(r,\theta)=\Pi^{-1}\big(f(r)(\cos(n\theta), \sin (n\theta))\big).
\end{equation}
Then a simple computation shows
\begin{equation}\label{duJu}
\frac{1}{2}|\nabla\widehat u|^2=\frac{|f'|^2}{2}+\frac{n^2 f^2}{2r^2}\ge  \frac{nf|f'|}{r}=|J\widehat u|.
\end{equation}

\begin{lemma}\label{lemmaarea}
If $f:[s,t]\to [0,\infty]$ is any function with $0\le s<t$, $f(s)=a$, $f(t)=b$ ($\lim_{r\uparrow t}f(r)=\infty$ if $b=\infty$), $a\le b$ and $u\in W^{1,2}(B_t\setminus B_s)$ is as in \eqref{fu}, then 
\begin{equation}\label{contoarea}
A(u,D_t\setminus D_s)\ge 4\pi n\frac{b^2}{1+b^2}-4\pi n\frac{a^2}{1+a^2}, \qquad \Big(\frac{b^2}{1+b^2}=1\text{ if }b=\infty\Big).
\end{equation}
The inequality is an equality if and only if $f$ is monotone. An analogous statement applies when $a>b$ (possibly with $a=\infty$).
\end{lemma}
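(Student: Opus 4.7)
The plan is to reduce the area integral to a one-dimensional integral in $r$ using the axial symmetry, recognize the integrand as (up to sign) the derivative of $\frac{f^2}{1+f^2}$, and apply a triangle inequality.

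By formula \eqref{formula2}, identity \eqref{duJu}, and the switch to polar coordinates,
$$A(u,D_t\setminus D_s)=4\int_{D_t\setminus D_s}\frac{|J\widehat u|}{(1+|\widehat u|^2)^2}dxdy=8\pi n\int_s^t \frac{f(r)|f'(r)|}{(1+f(r)^2)^2}\,dr.$$
(The hypothesis $u\in W^{1,2}(B_t\setminus B_s)$ guarantees that $f\in W^{1,2}_{\mathrm{loc}}(s,t)$ with $f$ absolutely continuous on any subinterval where it stays bounded, so these computations are justified.)

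Next, I would observe the primitive
$$\frac{d}{dr}\Big[\frac{f(r)^2}{1+f(r)^2}\Big]=\frac{2f(r)f'(r)}{(1+f(r)^2)^2}.$$
Assuming first that $b<\infty$, integration from $s$ to $t$ gives
$$\int_s^t \frac{ff'}{(1+f^2)^2}\,dr=\frac{1}{2}\Big(\frac{b^2}{1+b^2}-\frac{a^2}{1+a^2}\Big).$$
Since $a\le b$, the right-hand side is nonnegative, so the elementary inequality $|ff'|\ge ff'$ yields
$$\int_s^t\frac{f|f'|}{(1+f^2)^2}\,dr\;\ge\;\int_s^t\frac{ff'}{(1+f^2)^2}\,dr\;=\;\frac{1}{2}\Big(\frac{b^2}{1+b^2}-\frac{a^2}{1+a^2}\Big),$$
and multiplying by $8\pi n$ gives \eqref{contoarea}.

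For the case $b=\infty$, I would choose a sequence $t_k\uparrow t$ with $f(t_k)\to\infty$ (which exists by assumption), apply the bound just proved on each subinterval $[s,t_k]$ using $\frac{f(t_k)^2}{1+f(t_k)^2}\to 1$, and use monotone convergence on the left-hand side. For the case $a>b$, an entirely analogous argument applies with $|ff'|\ge -ff'$ (and the analogous limiting procedure handles $a=\infty$). Finally, equality in $\int|ff'|\ge|\int ff'|$ forces $ff'$ to have constant sign a.e.; since $f\ge 0$ and (by absolute continuity) the zero set of $f$ is closed and cannot separate pieces of different monotonicity without violating the sign condition on $f'$, this is equivalent to $f$ being monotone. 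The main (essentially only) obstacle is the bookkeeping around the possibly infinite values of $f$, which is handled cleanly by the truncation $t_k\uparrow t$ just described.
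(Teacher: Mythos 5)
Your proposal is correct and follows essentially the same route as the paper: reduce $A(u,D_t\setminus D_s)$ to the radial integral $8\pi n\int_s^t f|f'|/(1+f^2)^2\,dr$ via \eqref{formula2} and \eqref{duJu}, replace $|f'|$ by $f'$ and integrate the exact derivative of $f^2/(1+f^2)$ (the paper uses the equivalent primitive $-1/(1+f^2)$), then handle $b=\infty$ by truncation and read off the equality case from when $|ff'|=ff'$ a.e. No gaps worth flagging.
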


\begin{proof} Assume first $0< b<\infty$. Then we compute, using \eqref{formula2} and \eqref{duJu},
\begin{equation*}
\begin{split}
A(u,D_t\setminus D_s)&=4\pi n\int_s^t \frac{2 f |f'|}{(1+f^2)^2}dr\ge 4\pi n\int_s^t \frac{2 f f'}{(1+f^2)^2}dr\\
&=4\pi n\bigg[-\frac{1}{1+f^2}\bigg]_s^t=\frac{4\pi n b^2}{1+b^2}-\frac{4\pi n a^2}{1+a^2},
\end{split}
\end{equation*}
where the inequality is strict if and only if $f'<0$ on a set of positive measure, i.e. if $f$ is not monotone. When $b=\infty$ the same proof applies, up to a simple approximation procedure. The case $a>b$ is similar.
\end{proof}

\noindent In the following $C$ will denote a large positive constant which may change from line to line.

\section{Proof of Theorem \ref{trm1}}

Consider the open cylinder $\Sigma:=\{(x,y,z)\in \R{3}:x^2+y^2<1, -1<z<1\}.$
Since $B_1\subset\Sigma\Subset B_2$, it suffices to prove that $T_0$ minimizes
$$\M{D}(T,\Sigma):=\frac{1}{2}\int_{\Sigma}|\nabla u|^2dxdydz +4\pi \mass(L\res \Sigma)$$
over $\M{A}^{(n)}_{T_0,\Sigma}:=\big\{T\in \M{A}^{(n)}(B_2, S^2 ) : T=T_0 \textrm{ in } (B_2\backslash \overline \Sigma)\times S^2\big\}$. This will simplify the notation.

The proof proceeds by contradiction. Let from now on $n\geq 2$ be fixed and let us assume that there exists a current $T=\M{G}(u)+L\times \cur{S^2}\in\M{A}^{(n)}_{T_0,\Sigma}$ with $\M{D}(T,\Sigma)\leq \M{D}(T_0,\Sigma)$ and $T\neq T_0$.
Since $u$ is $n$-axially symmetric, we can find a function $f$ such that
$$u(r,\theta,z)=\Pi^{-1}\big(f(r,z)(\cos(n\theta),\sin(n\theta))\big).$$

\subsection*{Some preliminary lemmas}

\begin{lemma}\label{IT} We have $L=- n\cur{I}$ for some measurable set $I\subset z-axis\cap B_2$.
\end{lemma}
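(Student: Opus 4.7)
Plan:

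Parametrize $L$ along the upward-oriented $z$-axis by a measurable multiplicity function $\ell:(-2,2)\to n\mathbb Z$, possible since by definition of $\M A^{(n)}$ the current $L$ is $1$-rectifiable on the $z$-axis with multiplicity in $n\mathbb Z$. The condition $T=T_0$ on $(B_2\setminus\overline\Sigma)\times S^2$ pins $\ell\equiv -n$ on $(-2,-1)\cup(1,2)$, so the lemma reduces to showing $\ell\in\{-n,0\}$ for a.e.\ $z\in(-1,1)$; then $I:=\{z : \ell(z) = -n\}$ gives the claimed representation $L = -n\cur{I}$.

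First I would use the $H^1$-regularity of $u$ together with the axial-symmetric identity \eqref{duJu} to show that for a.e.\ $z$, the radial trace $f(0,z):=\lim_{r\downarrow 0}f(r,z)$ exists in $\{0,+\infty\}$. Indeed, integrating the pointwise estimate
$$\frac{1}{2}|\nabla u|^2\ge \frac{n^2 f^2}{2r^2}$$
over $B_2$ and using $u\in H^1$ forces, by Fubini, that for a.e.\ $z$ the quantity $n^2 f(r,z)^2/r^2$ be integrable near $r=0$, which rules out intermediate finite positive limits of $f(\cdot,z)$. This partitions $(-2,2)$ (up to a null set) into the measurable sets $Z_0:=\{z:f(0,z)=0\}$ and $Z_\infty:=\{z:f(0,z)=+\infty\}$, and the boundary datum $u=u_0$ outside $\overline\Sigma$ (with $f_0(r)=\alpha r^n\to 0$) places $(-2,-1)\cup(1,2)\subset Z_0$.

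The core step is to identify $\ell = -n\,\mathbf{1}_{Z_0}$ almost everywhere via the Cartesian-current constraint \eqref{bordo0}. Since $u$ is regular off the $z$-axis (each horizontal slice being a planar $n$-axially symmetric $H^1$ map with regular profile for $r>0$), $\partial\M G(u)$ is concentrated on the $z$-axis $\times S^2$. At each transition point $z_\ast\in\partial Z_0$ where $f(0,\cdot)$ passes between $0$ and $+\infty$, a direct axial-symmetric degree computation (integrating the pullback of the area form of $S^2$ over $\partial B_\varepsilon(0,0,z_\ast)$ using the formula of \eqref{formula2} and the $n$-fold wrapping in $\theta$) gives $\deg(u|_{\partial B_\varepsilon(0,0,z_\ast)})=\pm n$, with sign determined by the direction of the transition in $z$. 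Matching through $\partial\M G(u)=-\partial L\times\cur{S^2}$ forces $\ell$ to jump by $\pm n$ at $z_\ast$, arranged so that $\ell=-n$ on the $Z_0$-side and $\ell=0$ on the $Z_\infty$-side. Since $f(0,\cdot)$ assumes only the two values $0,+\infty$, its transitions strictly alternate in type, so starting from $\ell\equiv -n$ outside $\Sigma$ the multiplicity oscillates between $-n$ and $0$, giving $\ell=-n$ on $Z_0$ and $\ell=0$ on $Z_\infty$, i.e.\ $L=-n\cur{Z_0}$.

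The main obstacle is the rigorous treatment of two pathological situations: (a) when $Z_0,Z_\infty$ fail to be finite unions of intervals, and (b) when concentrated topological charges of magnitude $|k|n$ with $|k|\ge 2$ appear at exceptional points (in principle allowed for $H^1$ maps with oscillating profiles). For (a), a distributional / slicing argument for integer-multiplicity rectifiable currents suffices to identify $-\partial L$ with $n\,\partial\mathbf{1}_{Z_0}$ on the $z$-axis via \eqref{bordo0} and pin $\ell=-n\,\mathbf{1}_{Z_0}$ from the boundary datum. For (b), the energy hypothesis $\M D(T,\Sigma)\le\M D(T_0,\Sigma)$ (and, if needed, the fact that by semicontinuity we may replace $T$ with a minimizer) rules out such concentrations: a $\pm kn$ charge can be split into $|k|$ separate unit-$n$ charges at nearby points by a localized modification in the spirit of the dipole constructions of \cite{bcl, hlp}, strictly reducing the mass of $L$ (by $\ge n$ times the split distance) at a controllable Dirichlet cost, yielding a smaller $\M D$ and hence a contradiction.
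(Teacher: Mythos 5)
Your plan has the right overall shape---reduce to showing that the multiplicity function $\ell$ of $L$ along the axis takes values in $\{-n,0\}$, identify the radial limit of $f(\cdot,z)$ as $0$ or $+\infty$ for a.e.\ $z$, and match $\partial\M{G}(u)$ with $-\partial L\times\cur{S^2}$ using the boundary datum to pin the additive constant---and this is essentially what the cited Lemmas 2.1, 2.2 and 4.1 of \cite{hlp} accomplish (the paper's own proof simply adapts those lemmas, replacing $\Lambda,\Psi$ by $R^{(n)}\circ\Lambda$, $R^{(n)}\circ\Psi$). However, two of your steps contain genuine gaps. First, the existence of the radial limit is not established by your Fubini argument. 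The bound \eqref{duJu} concerns $\widehat u=\Pi\circ u$, not $u$; what $u\in H^1$ actually gives via \eqref{formula1} is $\int_0^1 \frac{f^2}{r(1+f^2)^2}\,dr<\infty$ for a.e.\ $z$, and the conformal factor is essential (without it you would wrongly exclude $f\to+\infty$ altogether). This integrability only yields $\liminf_{r\to 0}\frac{f^2}{(1+f^2)^2}=0$; it does not rule out a profile oscillating between values near $0$ and values near $+\infty$, which is precisely the scenario that would produce the exceptional charges you worry about in (b). The missing ingredient is a BV-type estimate such as $\int_0^1\big|\tfrac{d}{dr}\tfrac{f^2}{1+f^2}\big|\,dr=\tfrac{1}{4\pi n}A(u,D_1\times\{z\})\le\tfrac{1}{4\pi n}E(u,D_1\times\{z\})<\infty$ (cf.\ the computation in Lemma \ref{lemmaarea}), which forces $\tfrac{f^2}{1+f^2}$ to converge, and then the limit must lie in $\{0,1\}$; alternatively one can use the slice continuity obtained in Lemma \ref{lemma2} from the harmonic map equation and Hardy-space regularity. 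Once the limit exists, the flux through a.e.\ horizontal slice is $4\pi n$ times a quantity in $\{0,1\}$, so multiplicities of magnitude $\ge 2n$ are excluded automatically---no energy comparison is needed.

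Second, your fallback for obstacle (b) is not available. The lemma is a structure statement valid for every current in $\M{A}^{(n)}(B_2,S^2)$ and should not invoke the hypothesis $\M{D}(T,\Sigma)\le\M{D}(T_0,\Sigma)$; moreover, an energy-saving dipole modification ``in the spirit of \cite{bcl,hlp}'' is exactly the construction whose failure for $n\ge 2$ is the main theorem of this paper, and ``replacing $T$ by a minimizer'' is not permitted inside the contradiction argument since that minimizer could be $T_0$ itself. Similarly, the transition-point degree computation in your core step presumes that $Z_0$ and $Z_\infty$ are locally finite unions of intervals; for general measurable sets the identification $\ell=-n\,\mathbf{1}_{Z_0}$ must be run distributionally, by slicing $T$ with horizontal planes and using that $\partial\big(T\res (D_1\times(z,1)\times S^2)\big)$ has degree zero for a.e.\ $z$ (exactly as in the proof of Lemma \ref{lemma2}). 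As written, the heart of the lemma is asserted rather than proved.
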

\begin{proof}
The proof is analogous to the one of \cite[Lemma 4.1]{hlp} for the $1$-axially symmetric case, with the following natural modifications.  In Section 2 of \cite{hlp} the $1$-axially symmetric maps $\Lambda(x)=\frac{(x_1,x_2,x_3)}{|x|}$ and $\Psi(x)=\frac{(x_1,x_2,-x_3)}{|x|}$ from $\R{3}\setminus \{0\}$ into $S^2$ should be replaced by the $n$-axially symmetric maps $\Lambda^{(n)}:=R^{(n)}\circ\Lambda$ and $\Psi^{(n)}:= R^{(n)}\circ\Psi$, where $R^{(n)}:S^2\to S^2$ is the map
$$R^{(n)}(\cos\theta\sin\varphi,\sin\theta\sin\varphi,\cos\varphi) = (\cos(n\theta)\sin\varphi,\sin(n\theta)\sin\varphi,\cos\varphi).$$
Notice that $\deg (\pm \Lambda^{(n)}|_{S^2})=\pm n$ and $\deg (\pm \Psi^{(n)}|_{S^2})=\mp n$. With this in mind, the statements and proofs of Lemma 2.1, Lemma 2.2 and Lemma 4.1 of \cite{hlp} can be immediately adapted to the $n$-axially symmetric case.
\end{proof}

Up to modifying $I$ on a set of measure $0$, we can and do assume that every point of $I$ is a Lebesgue point of $I$ with respect to $\M{H}^1\res z-axis$, i.e.
\begin{equation}\label{lebesgue}
\lim_{r\downarrow 0}\frac{\M{H}^1(I\cap B_r(\xi))}{r}=1,\quad \text{for every }\xi\in I.
\end{equation}

\begin{lemma}\label{lemma1} Set $Z:=\overline I\backslash I.$
Then $\M{H}^1(Z)=0$. 
\end{lemma}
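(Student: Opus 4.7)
The idea is to leverage the integer-multiplicity rectifiable structure of $L$: since $T\in\cart(B_2,S^2)$, the boundary $\partial L$ has finite mass, and this forces $I$ to be---modulo $\M{H}^1$-null sets---a finite disjoint union of intervals on the $z$-axis. Combined with the Lebesgue-point normalisation \eqref{lebesgue}, this pins $I$ down as an honest finite union of open intervals and makes $Z$ a finite set.

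The first step is to compute $\partial L$ explicitly. Using $L=-n\cur{I}$ with $I$ a measurable subset of the $z$-axis, for any smooth compactly supported test function $\phi$ on $B_2$,
$$(\partial L)(\phi)=L(d\phi)=-n\int_I \de_z\phi\,dz,$$
so $\partial L$ is $n$ times the distributional derivative of $\chi_I$, viewed as a $0$-current supported on the $z$-axis, and $\mass(\partial L)=n\,P(I)$, where $P(I)$ denotes the one-dimensional perimeter of $I$. The Cartesian-current condition $\partial\M{G}(u)=-\partial L\times\cur{S^2}$ in $B_2\times S^2$ guarantees that $\mass(\partial L)$ is finite, so $\chi_I\in BV(\R{})$ and $I$ has finite perimeter on the $z$-axis.

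Next I would invoke the classical one-dimensional fact that a measurable subset of $\R{}$ with finite perimeter coincides, modulo $\M{H}^1$-null sets, with a finite disjoint union $\bigsqcup_{i=1}^N (a_i,b_i)$ of open intervals. Because \eqref{lebesgue} has been arranged at every point of $I$, the set $I$ equals its set of density-$1$ points, which for the canonical representative above is precisely $\bigsqcup_i(a_i,b_i)$. Hence $I=\bigsqcup_i(a_i,b_i)$ as sets, $\overline I=\bigsqcup_i[a_i,b_i]$, and $Z=\overline I\setminus I=\{a_1,b_1,\dots,a_N,b_N\}$ is finite; in particular $\M{H}^1(Z)=0$.

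The only non-trivial input is the finiteness of $\mass(\partial L)$, which is built into the $\cart(B_2,S^2)$ framework; once this is granted, the remainder is one-dimensional $BV$-theory plus a small verification that the finite-perimeter and Lebesgue-point representatives of $I$ agree pointwise and not merely almost everywhere. I do not foresee any substantial obstacle.
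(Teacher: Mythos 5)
Your reduction of the lemma to the finiteness of $\mass(\de L)$ is internally coherent (a one\-dimensional set of finite perimeter is, up to $\M{H}^1$-null sets, a finite union of intervals, and the normalisation \eqref{lebesgue} then identifies $I$ with the union of the open intervals), but the decisive input --- that membership in $\cart(B_2,S^2)$ guarantees $\mass(\de L)<\infty$ --- is not part of the framework and is false in general. The definition used in the paper requires $L$ to be integer multiplicity rectifiable, hence of finite \emph{mass}, and imposes \eqref{bordo0}; it imposes nothing on the mass of $\de L$. Condition \eqref{bordo0} identifies $\de L\times\cur{S^2}$ with $-\de\M{G}(u)$, and for a general $u\in H^1(B_2,S^2)$ this boundary need not have locally finite mass: maps with infinitely many point singularities accumulating on the axis (superpositions of shrinking dipoles, in the spirit of \cite{bcl} and \cite{riviere}) have finite Dirichlet energy and admit an $L$ of finite mass satisfying \eqref{bordo0}, while $\de L$ is an infinite sum of Dirac masses. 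Worse, the conclusion of the lemma itself fails for general elements of $\M{A}^{(n)}(B_2,S^2)$: let $K$ be a fat Cantor set on the $z$-axis inside $B_1$ and place a degree-$n$ dipole over each complementary interval $(a_k,b_k)$ of $K$; the resulting $T=\M{G}(u)+(-n\cur{I})\times\cur{S^2}$ with $I=\bigcup_k(a_k,b_k)$ is an $n$-axially symmetric Cartesian current with $\mass(L)\le n\sum_k(b_k-a_k)<\infty$, yet $\overline I\setminus I$ contains $K$ minus a countable set and so has positive $\M{H}^1$-measure. Hence no proof of the lemma can avoid the hypothesis $\M{D}(T,\Sigma)\le\M{D}(T_0,\Sigma)$, which your argument never invokes.

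That hypothesis is exactly what the paper's proof exploits. Assuming $\M{H}^1(Z)>0$, it selects a point $\xi\in Z\cap\Sigma$ that is simultaneously an $\M{H}^1$-density point of $Z$ and a point where $r^{-1}\int_{B_r(\xi)}|\nabla u|^2\,dxdydz\to 0$ (such points are $\M{H}^1$-a.e.\ on the axis because $|\nabla u|^2\in L^1$), and then applies the monotonicity argument of Giaquinta, Modica and Sou\v cek for the energy-comparison current to conclude $\M{H}^1(I\cap B_{r_0}(\xi))=0$ for small $r_0$; the normalisation \eqref{lebesgue} upgrades this to $I\cap B_{r_0}(\xi)=\emptyset$, contradicting $\xi\in\overline I$. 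If you wish to salvage the $BV$ route, you would first have to show that near-minimality forces $\de L$ to have finite mass in $\Sigma$ --- a statement strictly stronger than the lemma (your own argument shows it would make $Z$ finite), and not easier to prove.
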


\begin{proof} Since $I\cap  (B_2\setminus \overline{\Sigma})= z-axis\cap  (B_2\setminus \overline{\Sigma})$, we have $Z\subset\overline{\Sigma}$. Assume by contradiction that $\M{H}^1(Z)>0$ and let $\xi\in Z\cap \Sigma$ be a Lebesgue point of $Z$ (with respect to $\M{H}^1\res Z$) such that
\begin{equation}\label{eqd}
\lim_{r\to 0}\frac{1}{r}\int_{B_r(\xi)}|\nabla u|^2 dxdydz=0.
\end{equation}
Such a point exists because \eqref{eqd} is true for $\M{H}^1$-almost every $\xi\in z-axis \cap B_2$, by $|\nabla u|^2\in L^1_{\loc}(\R{3})$ and a standard covering argument, see e.g. \cite[Section 2.4.3]{eg}, or \cite[2.10.19(3)]{federer}. Then by the monotonicity argument given in the proof of Theorem 5 of \cite{gms}, one has $\M{H}^1(I\cap B_{r_0}(\xi))=0$ for $r_0>0$ small enough, hence $I\cap B_{r_0}(\xi)=\emptyset$ by \eqref{lebesgue}. This contradicts $\xi\in \overline I$.
\end{proof}

\begin{lemma}\label{lemma2} There is a set $J\subset (z-axis\cap B_2)\backslash \overline I$, such that $\M{H}^1((z-axis\cap B_2) \backslash (\overline I\cup J))=0$ and
\begin{equation}\label{eqz}
\lim_{r\to 0} f(r,z)=+\infty,\quad \text{for }(0,0,z)\in J.
\end{equation}
Similarly
\begin{equation}\label{eqz2}
\lim_{r\to 0} f(r,z)=0,\quad \text{for }\M{H}^1\text{-a.e. }(0,0,z)\in I\cap B_2.
\end{equation}
\end{lemma}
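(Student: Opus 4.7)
The plan is to establish the stronger identity
$$\frac{1}{1+f(0^+,z)^2}=\chi_I(z)\quad \text{for $\M{H}^1$-a.e. }z\in(-2,2),$$
where $f(0^+,z):=\lim_{r\downarrow 0}f(r,z)$. Both assertions of the lemma are then immediate: $f(0^+,z)=0$ on $I$ a.e., $f(0^+,z)=+\infty$ on $(-2,2)\setminus\overline I$ a.e., and the set $J:=\{(0,0,z)\in(z\text{-axis}\cap B_2)\setminus\overline I : f(0^+,z)=+\infty\}$, together with Lemma \ref{lemma1}, does the covering.

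For the existence of the limit I proceed as follows. By Fubini, for $\M{H}^1$-a.e.\ $z_0$ the slice $u(\cdot,z_0)$ lies in $H^1(D_{s(z_0)},S^2)$ with finite energy. Setting $g:=f^2/(1+f^2)\in[0,1]$, formulas \eqref{formula1} and \eqref{duJu} give both $\int_0^{s(z_0)}g(1-g)/r\,dr<\infty$ and $\int_0^{s(z_0)}|g_r|^2\, r\,dr<\infty$. The first bound makes the log-measure of $\{r:g(r)\in[\delta,1-\delta]\}$ finite for every $\delta>0$, while the Cauchy--Schwarz bound $|g(r_1)-g(r_2)|^2\le C\log(r_2/r_1)$ forbids fast log-scale oscillation; a standard oscillation argument then forces $g(r,z_0)\to 0$ or $1$ as $r\downarrow 0$, hence $f(r,z_0)\to 0$ or $+\infty$.

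For the identity I test the Cartesian condition $\partial T=0$ in $B_2\times S^2$ against $\beta=\chi(r,z)\omega$, where $\omega$ is the area form of $S^2$ (normalized $\int_{S^2}\omega=4\pi$) and $\chi$ is smooth, axially symmetric and compactly supported in $B_2$. From the formula
$$u^*\omega=\frac{4nf}{(1+f^2)^2}(f_r\,dr+f_z\,dz)\wedge d\theta,$$
obtained from $\hat u=f(r,z)(\cos n\theta,\sin n\theta)$, the $\theta$-integration reduces $T(d\beta)=0$ to an identity on the $(r,z)$ half-plane. Two integrations by parts via $\frac{ff_\bullet}{(1+f^2)^2}=-\tfrac12\partial_\bullet\frac{1}{1+f^2}$ make the interior terms cancel (since $\chi_{rz}=\chi_{zr}$), leaving only the boundary term at $r=0$, i.e.,
$$\int_{-2}^{2}\chi_z(0,z)\,\frac{1}{1+f(0^+,z)^2}\,dz=\int_I\chi_z(0,z)\,dz.$$
Taking $\chi(r,z)=\psi(z)\eta(r)$ with $\eta\in C_c^\infty([0,R))$, $\eta(0)=1$, and $\psi\in C_c^\infty((-z_0,z_0))$ for any $R^2+z_0^2<4$, then exhausting, one gets $\int\psi'(z)\bigl[\tfrac{1}{1+f(0^+,z)^2}-\chi_I(z)\bigr]\,dz=0$, so the bracket is a.e.\ constant on $(-2,2)$. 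Evaluating near $z=\pm 2$, where $u\equiv u_0$ forces $f(r,z)=\alpha r^n$ (hence $f(0^+,z)=0$) and $z\in I$, pins this constant to $0$.

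The main obstacle is the test-form step: the naive choice $\chi=\chi(z)$ fails because such a function has no compact support in $B_2$, so one must take a product $\chi=\psi(z)\eta(r)$ and track both integrations by parts with a radial cutoff, checking that every boundary contribution outside $r=0$ vanishes while the $r=0$ one collapses---thanks to $\eta(0)=1$---to exactly $\psi'(z)/(1+f(0^+,z)^2)$.
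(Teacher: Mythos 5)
Your proposal is correct in outline but follows a genuinely different route from the paper's proof. For the dichotomy $f(r,z)\to 0$ or $+\infty$ on a.e.\ slice, the paper invokes the harmonic map equation $-\Delta u=|\nabla u|^2u$, the Hardy-space regularity of its right-hand side (hence $\nabla^2u\in L^1_{\loc}$), and then Fubini plus the embedding $W^{2,1}(D_1)\hookrightarrow C^0$ to get continuity of a.e.\ horizontal slice at the origin, from which the dichotomy is immediate. Your oscillation argument on $g=f^2/(1+f^2)$ uses only the finiteness of the slice energy --- no Euler--Lagrange equation and no Hardy spaces --- so it is more elementary and would apply to an arbitrary finite-energy $n$-axially symmetric competitor, not just a minimizer; the one point to be careful about is that the bound $|g(r_1)-g(r_2)|^2\le C\log(r_2/r_1)$ must be used in its localized form, with $C$ replaced by $\int_{r_1}^{r_2}|g_r|^2r\,dr$, which tends to zero on dyadic annuli (the global constant alone does not forbid slow oscillation between $0$ and $1$). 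For deciding which alternative occurs, the paper slices the current over the half-cylinders $\Sigma(z)$ and argues that the degree of $\de(T\res\Sigma(z))$ must vanish, which rules out $f\to 0$ for $(0,0,z)\in J$; you instead test $\de T=0$ against $\chi\,\omega$ and integrate by parts, arriving at the cleaner identity $(1+f(0^+,z)^2)^{-1}=\chi_I(z)$ a.e., which delivers \eqref{eqz} and \eqref{eqz2} in one stroke. Both implementations exploit exactly the Cartesian-current condition \eqref{bordo0}, so they are morally equivalent, but yours trades the degree-theoretic slicing for an explicit distributional computation. Besides the boundary-term bookkeeping you already flag, you should also record that $\de_r\bigl((1+f^2)^{-1}\bigr)\in L^1((0,R))$ for a.e.\ $z$, which is needed to pass to the limit in the boundary term at $r=0$ and follows from the finite slice energy via
$$\frac{2f|f_r|}{(1+f^2)^2}\le\frac{n f^2}{r(1+f^2)^2}+\frac{r f_r^2}{n(1+f^2)^2}.$$
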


\begin{proof} Since it is obvious that \eqref{eqz2} applies for $(0,0,z)\in B_2\setminus \overline\Sigma$, we will focus on the case $(0,0,z)\in \Sigma$, i.e. $-1<z<1$. We first claim that, for almost every $z\in (-1,1)$, $u\big|_{D_1\times\{z\}}$ is continuous.
Indeed, as shown for instance in \cite[Section 4]{dz} (in the case $n=1$, but the case $n>1$ is identical), $u$ satisfies
\begin{equation}\label{eq}
-\Delta u=|\nabla u|^2 u \quad \textrm{in }\Sigma,
\end{equation}
in the sense of distribution. It is well known, see e.g. \cite[Lemma 3.2.10]{hel}, that the right-hand side of \eqref{eq} belongs to the Hardy space $\M{H}^1_{\loc}(\Sigma)$, hence  $\nabla^2 u\in L^1_{\loc}(\Sigma)$ by elliptic estimates. By a Fubini-type argument, we infer then that
$$(\nabla^2u)\big|_{D_1\times\{z\}}\in L^1_{\loc}(D_1)\quad \textrm{ for almost every } z\in (-1,1),$$
which implies $u\big|_{D_1\times\{z\}}\in C^0_{\loc}(D_1)$, by the embedding $W^{2,1}_{\loc}(D_1)\hookrightarrow C^0_{\loc}(D_1)$. Since $u$ is smooth away from the $z$-axis, see e.g. \cite[Lemma 5.1]{hlp} (where again only the case $n=1$ is treated, but the same proof applies for any $n\ge 1$), we have in fact that $u\big|_{D_1\times\{z\}}\in C^0(D_1)$ for a.e. $z\in (-1,1)$, as claimed.

Let $J\subset (z-axis\cap B_1)\backslash \overline I$ be the set of points $(0,0,z)$ which are  Lebesgue density points of $(z-axis\cap B_1)\backslash \overline I$ (with respect to the $\M{H}^1$ measure), such that $u\big|_{D_1\times\{z\}}\in C^0(D_1)$ and
\begin{equation}\label{slice}
\partial \big(\M{G}\big(u|_{\Sigma(z)}\big)\big) \res \Sigma =\M{G}\big(u|_{D_1\times\{z\}}\big), \quad \Sigma(z):=D_1\times (z,1)\subset\Sigma .
\end{equation}
The slicing property \eqref{slice} is satisfied for almost every $z\in (-1,1)$, since $\M{G}(u)$ is a normal current, see e.g. \cite[Prop. 1, Sec. 2.2.5]{GMS2}, so $\M{H}^1((z-axis\cap B_2)\setminus (\overline I\cup J))=0$.

We now claim that \eqref{eqz} holds true. Fix $z\in (-1,1)$ with $(0,0,z)\in J$. First of all the continuity of $u\big|_{D_1\times\{z\}}$ implies
$$\lim_{r\to 0} f(r,z)=+\infty \quad\textrm{or}\quad \lim_{r\to 0} f(r,z)=0.$$
Since $T$ is a Cartesian current, the degree of the $2$-dimensional current
$$\partial(T \res \Sigma(z))= \de(\M{G}(u\big|_{\Sigma(z)}))+\de( (L\res \Sigma(z))\times \cur{S^2})=\M{G}\big(u\big|_{\partial \Sigma(z)}\big)- n\cur{(0,0,z)}\times \cur{S^2}$$
must be zero (see e.g. \cite[pag. 468]{gmsb}), and this rules out the possibility $\lim_{r\to 0} f(r,z)=0$. This completes the proof of \eqref{eqz}, and the proof of \eqref{eqz2} is completely analogous.
\end{proof}

\subsection*{Strategy of the proof}

Assume first that $L=L_0:=-n\cur{z-axis}\res B_2$. Then $\M{D}(T,\Sigma)\leq \M{D}(T_0,\Sigma)$ is equivalent to
$$\frac{1}{2}\int_{\Sigma}|\nabla u|^2dxdydz\leq \frac{1}{2}\int_{\Sigma}|\nabla u_0|^2dxdydz,$$
and by \eqref{E>A} we have for a.e. $z\in (-1,1)$ that $u\big|_{D_1\times\{z\}}\in W^{1,2}(D_1)\cap C^0(D_1)$ and
\begin{equation}\label{Euu0}
E(u, D_1\times \{z\})\ge A(u, D_1\times \{z\})\ge A(u_0, D_1\times \{z\})=E(u_0, D_1\times \{z\})=4\pi n\frac{\alpha^2}{1+\alpha^2},
\end{equation}
where the first inequality is strict unless $u\big|_{D_1\times\{z\}}$ is conformal by \eqref{E>A}, in the second one we used that $\alpha\in (0,1)$ and Lemma \ref{lemmaarea}, the first equality follows from the conformality of $u_0$, and the second equality follows from Lemma \ref{lemmaarea} and the fact that $f_0(r)=\alpha r^n$ is monotone. Then it easily follows that $u\big|_{D_1\times\{z\}}=u_0\big|_{D_1\times\{z\}}$ for a.e. $z\in (-1,1)$, hence $u=u_0$ and $T=T_0$.
\medskip

Assume now that $L\neq L_0$. Then the set $J$ defined in Lemma \ref{lemma2} has positive $\M{H}^1$-measure. As before, we write $z-axis\cap B_2=\overline I\cup J\cup N$, where $\M{H}^1(N)=0$.
Define for $(0,0,z)\in J$
$$\psi(z):=4\pi n+\frac{4\pi n\alpha^2}{1+\alpha^2}-\frac{1}{2}\int_{D_1\times\{z\}}|\widetilde\nabla u|^2dxdy,\qquad \widetilde \nabla:=\bigg(\frac{\de}{\de x},\frac{\de}{\de y}\bigg).$$
The quantity $\psi(z)$ measures the maximal (because it ignores the $z$-derivative) energy gain (possibly negative) which we can expect by replacing $u_0$ with $u$ in $D_1\times \{ z\}$ and removing the vertical part $n\cur{(0,0,z)}\times \cur{S^2}$.
We must have $\psi(z)>0$ for some $(0,0,z)\in J$, otherwise
\begin{equation*}
\begin{split}
\M{D}(T,\Sigma)>&\int_{(0,0,z)\in I\cap B_1}\bigg(\frac{1}{2}\int_{D_1\times\{z\}}|\widetilde \nabla u|^2 dxdy\bigg)dz+4\pi n\M{H}^1(I\cap B_1) \\
&+\int_{(0,0,z)\in J}\bigg(\frac{1}{2}\int_{D_1\times \{z\}}|\widetilde \nabla u|^2dxdy\bigg)dz\\
\ge &\bigg(\frac{4\pi n \alpha^2}{1+\alpha^2} +4\pi n\bigg)(\M{H}^1(I\cap B_1)+\M{H}^1(J))=\M{D}(T_0,\Sigma),
\end{split}
\end{equation*}
where the first inequality is strict because the integrals on the right don't take into account the $z$-derivative, which cannot vanish identically if $J\neq \emptyset$, and in the second inequality we used \eqref{Euu0} for $(0,0,z)\in I\cap B_1$.
Now we can choose $(0,0,z_1)\in J$ such that
\begin{equation}\label{psi}
\psi(z_1)\ge \frac{1}{2}\sup_{(0,0,z)\in J}\psi(z)>0,
\end{equation}
In the next section we will prove that $\psi(z_1)\le 0$, contradiction.

\subsection*{The energy estimates}

\begin{lemma}\label{lemma0}Let
$$a:=\min_{r\in (0,1]}f(r,z_1) \le \alpha.$$
Then $a>0$,
\begin{equation}\label{stima0}
\psi(z_1)\le \frac{8\pi n a^2}{1+a^2},
\end{equation}
and
\begin{equation}\label{stimadz}
\frac{1}{2}\int_\Sigma \bigg|\frac{\partial u}{\partial z}\bigg|^2dx dy dz\le \frac{32\pi n a^2}{1+a^2}.
\end{equation}
\end{lemma}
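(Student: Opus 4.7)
The plan is to first bound $\psi(z_1)$ by analysing the radial function $r\mapsto f(r,z_1)$ via Lemma \ref{lemmaarea}, and then to promote this to the global $\partial u/\partial z$ estimate by comparing $\M{D}(T,\Sigma)$ with $\M{D}(T_0,\Sigma)$. Since $(0,0,z_1)\in J$, Lemma \ref{lemma2} gives $u|_{D_1\times\{z_1\}}\in C^0(D_1)$ and $f(r,z_1)\to+\infty$ as $r\downarrow 0$. Moreover, the exterior matching $T=T_0$ on $(B_2\setminus\overline\Sigma)\times S^2$ together with smoothness of $u$ away from the $z$-axis forces $f(1,z_1)=\alpha$. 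Hence $f(\cdot,z_1)$ is continuous on $(0,1]$, blows up at $0$, and equals $\alpha$ at $1$, so the minimum $a$ is attained at some $r_0\in(0,1]$ and automatically satisfies $a\le\alpha$.

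I would then apply Lemma \ref{lemmaarea} separately on $[0,r_0]$ (where $f$ runs from $\infty$ down to $a$) and on $[r_0,1]$ (where $f$ runs from $a$ up to $\alpha$), and sum to obtain
$$A(u,D_1\times\{z_1\}) \ge \frac{4\pi n}{1+a^2}+\frac{4\pi n\alpha^2}{1+\alpha^2}-\frac{4\pi n a^2}{1+a^2}=\frac{4\pi n(1-a^2)}{1+a^2}+\frac{4\pi n\alpha^2}{1+\alpha^2}.$$
Combined with $E\ge A$ from \eqref{E>A} and the definition of $\psi$, this yields
$$\psi(z_1)\;=\;4\pi n+\frac{4\pi n\alpha^2}{1+\alpha^2}-E(u,D_1\times\{z_1\})\;\le\;4\pi n-\frac{4\pi n(1-a^2)}{1+a^2}\;=\;\frac{8\pi n a^2}{1+a^2},$$
which is \eqref{stima0}. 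Since $\psi(z_1)>0$ by \eqref{psi}, the case $a=0$ is excluded, hence $a>0$.

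For \eqref{stimadz} I decompose $z\text{-axis}\cap\Sigma$, up to a null set, as $(\overline I\cap\Sigma)\sqcup J$ with $\M{H}^1(\overline I\cap\Sigma)+\M{H}^1(J)=2$ (the inclusion $J\subset\Sigma$ follows because $T=T_0$ outside $\overline\Sigma$ puts the rest of the $z$-axis into $\overline I$). On slices $(0,0,z)\in I\cap\Sigma$ the inequality \eqref{Euu0} gives $E(u,D_1\times\{z\})\ge\tfrac{4\pi n\alpha^2}{1+\alpha^2}$; on slices $(0,0,z)\in J$ the very definition of $\psi$ gives $E(u,D_1\times\{z\})=4\pi n+\tfrac{4\pi n\alpha^2}{1+\alpha^2}-\psi(z)$. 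Integrating in $z$, adding $\tfrac{1}{2}\int_\Sigma|\partial u/\partial z|^2\,dxdydz$ and the vertical mass $4\pi n\M{H}^1(\overline I\cap\Sigma)$, and recalling that $\M{D}(T_0,\Sigma)=8\pi n+\tfrac{8\pi n\alpha^2}{1+\alpha^2}$, the two $4\pi n\,\M{H}^1(J)$ contributions cancel exactly and the assumption $\M{D}(T,\Sigma)\le\M{D}(T_0,\Sigma)$ collapses to
$$\frac{1}{2}\int_\Sigma\left|\frac{\partial u}{\partial z}\right|^2 dxdydz\;\le\;\int_J\psi(z)\,dz\;\le\;2\psi(z_1)\,\M{H}^1(J)\;\le\;4\psi(z_1)\;\le\;\frac{32\pi n a^2}{1+a^2},$$
using \eqref{psi} and $\M{H}^1(J)\le 2$. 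The main point requiring care is the bookkeeping that matches the $4\pi n$ coming from the vertical mass against the $4\pi n$ baked into $\psi$; once this exact cancellation is verified, the rest is an algebraic rearrangement of the two-term lower bound.
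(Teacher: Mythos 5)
Your proof is correct and follows the paper's argument essentially step for step: the same two-piece application of Lemma \ref{lemmaarea} on $D_{r_0}\times\{z_1\}$ (using \eqref{eqz} at $r=0$) and on $(D_1\setminus D_{r_0})\times\{z_1\}$, combined with \eqref{E>A}, to get \eqref{stima0}; and the same bookkeeping
$$\frac{1}{2}\int_\Sigma \Big|\frac{\de u}{\de z}\Big|^2dxdydz=\M{D}(T,\Sigma)-\frac{1}{2}\int_\Sigma|\widetilde\nabla u|^2dxdydz-4\pi n\M{H}^1(I\cap\Sigma)\le\int_J\psi(z)\,dz\le 4\psi(z_1)$$
for \eqref{stimadz}, where the exact cancellation of the two $4\pi n\,\M{H}^1(J)$ contributions that you single out is indeed the crux.

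The one genuine omission is the case $a<0$. The profile $f(\cdot,z_1)$ is a signed function ($f=0$ corresponds to the north pole), and the lemma asserts $a>0$, not merely $a\neq 0$; moreover Lemma \ref{lemmaarea} is stated only for profiles with values in $[0,\infty]$, so your application of it on $[r_0,1]$ with endpoint value $a$ tacitly assumes $a\ge 0$. The paper disposes of $a<0$ by choosing $0<r_1<r_2<1$ with $f(r_1,z_1)=f(r_2,z_1)=0$ and $f(\cdot,z_1)\ge 0$ on $(0,r_1)\cup(r_2,1)$, and applying the area lemma separately on $D_{r_1}\times\{z_1\}$ (where $f$ runs from $+\infty$ to $0$) and on $(D_1\setminus D_{r_2})\times\{z_1\}$ (where $f$ runs from $0$ to $\alpha$); these two contributions alone already give $\frac{1}{2}\int_{D_1\times\{z_1\}}|\widetilde\nabla u|^2dxdy\ge 4\pi n+\frac{4\pi n\alpha^2}{1+\alpha^2}$, hence $\psi(z_1)\le 0$, contradicting \eqref{psi}. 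With this case added, your writeup is complete and matches the paper's proof.
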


\begin{proof} Assume $a\ge  0$ and take any $r\in (0, 1]$ such that $f(r,z_1)=a$. Then Lemma \ref{lemmaarea} and \eqref{eqz} yield
\begin{equation*}
\begin{split}
\frac{1}{2}\int_{D_1\times \{ z_1\}} |\widetilde \nabla u|^2 dxdy \geq & A(u, (D_1\setminus D_r)\times \{z_1\})+ A(u, D_r\times \{z_1\})\\
\ge &\bigg(\frac{4\pi n \alpha^2}{1+\alpha^2}-\frac{4\pi n a^2}{1+a^2}\bigg) +\bigg( 4\pi n -\frac{4\pi na^2}{1+a^2}\bigg),
\end{split}
\end{equation*}
and \eqref{stima0} follows at once. If $a=0$ this yields $\psi(z_1)\le 0$, contradiction. Similarly if $a<0$ choose $0<r_1<r_2<1$ such that $f(r_1,z_1)=f(r_2,z_1)=0$ and $f(r,z_1)\ge 0$ for $r\in (0,r_1)\cup (r_2,1)$, and apply Lemma \ref{lemmaarea} on $(D_1\setminus D_{r_2})\times \{z_1\}$ and on $D_{r_1}\times\{z_1\}$ separately to get again $\psi(z_1)\le 0$.
As for \eqref{stimadz}, for $(0,0,z)\in I$ and $0\le \alpha<1$, \eqref{Euu0} and \eqref{psi} yield
\begin{equation*}
\begin{split}
\frac{1}{2}\int_\Sigma \bigg|\frac{\partial u}{\partial z}\bigg|^2dx dy dz& =
\M{D}(T,\Sigma)-\frac{1}{2}\int_{\Sigma}|\widetilde \nabla u|^2 dxdydz-4\pi n\M{H}^1(I\cap B_1)\\
&\le \M{D}(T_0,\Sigma) -\frac{1}{2}\int_{\Sigma}|\widetilde \nabla u|^2 dxdydz-4\pi n\M{H}^1(I\cap B_1)\\
&\le \int_{(0,0,z)\in J} \psi(z)dz\le 2\psi(z_1)\M{H}^1(J)\le 4\psi(z_1),
\end{split}
\end{equation*}
and the conclusion follows from \eqref{stima0}.
\end{proof}

We have seen that the shape of the profile of $u\big|_{D_1\times\{z_1\}}$, in particular of the infimum of  $f(\cdot, z_1)$, determines the constraint \eqref{stimadz} on the $z$-derivative of $u$. We shall now see how \eqref{stimadz} in turn implies a constraint on the shape of $u$ and consequently a loss of conformality which, for $\alpha$ small enough and $n\ge 2$, forces $\psi(z_1)<0$. This will be the desired contradiction which proves that $L=L_0$ and completes the proof of Theorem \ref{trm1}.

\begin{lemma}\label{lemmasigma} Assume that $0<\alpha\le \tfrac{1}{4}$ and set
$$s:=\inf\big\{r\in (0,1): f(r,z_1)=\tfrac{1}{2}\big\}$$
Then we have $s\leq C_0 a$ for a fixed positive constant $C_0$.
\end{lemma}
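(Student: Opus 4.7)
Setting $\Phi(r, z) := 2 \arctan f(r, z)$, the idea is to compare, for a.e. $r \in (0, s)$, the value $\Phi(r, z_1) \ge 2\arctan(1/2)$ (which holds by definition of $s$ and the continuity of $f(\cdot, z_1)$ granted by Lemma \ref{lemma2}) against the boundary value $\Phi(r, 1) = 2 \arctan(\alpha r^n) \le 2 \arctan(1/4)$ (coming from $u = u_0$ on $\partial \Sigma$ and $\alpha \le 1/4$). The resulting ``height gap''
\[
\Phi(r, z_1) - \Phi(r, 1) \ge c_0 := 2\arctan(1/2) - 2\arctan(1/4) > 0
\]
must be bridged in the $z$-direction, costing a definite amount of $\partial_z$-energy of $u$ per unit $r$-weight; accumulating this cost on a cylinder of radius $s$ and matching it against \eqref{stimadz} will yield $s \le C_0 a$.

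\emph{The calculation.} The conformality of $\Pi$ gives $|\partial_z u|^2 = 4|\partial_z f|^2/(1+f^2)^2 = |\partial_z \Phi|^2$, so in cylindrical coordinates \eqref{stimadz} reads
\[
\int_{-1}^1 \int_0^1 r\, |\partial_z \Phi(r, z)|^2 \, dr \, dz \; \le \; \frac{32 n a^2}{1+a^2} \; \le \; 32\, n\, a^2. \qquad (\star)
\]
By Fubini, $\Phi(r, \cdot) \in H^1(-1, 1) \hookrightarrow C^0([-1, 1])$ for a.e. $r \in (0,1)$, so the fundamental theorem of calculus and the Cauchy--Schwarz inequality (together with $1 - z_1 \le 2$) give
\[
c_0^2 \le |\Phi(r, 1) - \Phi(r, z_1)|^2 \le (1 - z_1) \int_{z_1}^{1} |\partial_z \Phi(r, z)|^2 \, dz \le 2 \int_{z_1}^{1} |\partial_z \Phi(r,z)|^2 \, dz,
\]
hence $\int_{z_1}^1 |\partial_z \Phi(r, z)|^2\, dz \ge c_0^2/2$ for a.e. $r \in (0, s)$. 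Multiplying by $r$ and integrating over $r \in (0, s)$, one finds
\[
\frac{c_0^2\, s^2}{4} \; \le \; \int_0^s r \int_{z_1}^1 |\partial_z \Phi|^2 \, dz \, dr \; \overset{(\star)}{\le} \; 32\, n\, a^2,
\]
which produces $s \le \sqrt{128 n}\, a / c_0 =: C_0\, a$.

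\emph{The main difficulty.} The chief delicacy will be to ensure that the pointwise estimates on $\Phi(r, 1)$ and $\Phi(r, z_1)$ hold for a common a.e.-set of $r \in (0, s)$, so that the one-dimensional Cauchy--Schwarz inequality can be applied on each slice. The upper bound $\Phi(r, 1) \le 2\arctan(1/4)$ for a.e. $r$ comes from trace theory applied to $u \in H^1(\Sigma)$ together with the boundary condition $u = u_0$ on $\partial \Sigma$, while the lower bound $\Phi(r, z_1) \ge 2\arctan(1/2)$ for \emph{every} $r \in (0, s]$ relies on the hypothesis $(0, 0, z_1) \in J$, the continuity of $u|_{D_1 \times \{z_1\}}$ given by Lemma \ref{lemma2}, and the fact that the infimum defining $s$ is attained by continuity. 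Once these two ingredients are in place, the rest is a routine Cauchy--Schwarz plus Fubini argument.
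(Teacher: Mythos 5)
Your proof is correct and follows essentially the same route as the paper: on each vertical line $r=\mathrm{const}\le s$ the map must travel from $f\le 1/4$ on $\de\Sigma$ to $f\ge 1/2$ at $z=z_1$, and the one--dimensional Cauchy--Schwarz inequality turns this definite oscillation into a lower bound $\gtrsim s^2$ for the $\de_z$-energy on the cylinder of radius $s$, which is then played against \eqref{stimadz}. The only cosmetic differences are that you phrase the oscillation in terms of the colatitude $\Phi=2\arctan f$ rather than the chordal distance $|u(r,z_1)-u(r,\pm 1)|$, and you reach $\de\Sigma$ at $z=1$ rather than $z=-1$; both choices are equivalent.
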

\begin{proof}
We have for $0< \alpha\le \tfrac{1}{4}$ and for $r\in (0,s]$
$$f(r,z_1)\geq \frac{1}{2}, \quad f(r,-1)=\alpha r^2\leq \frac{1}{4},$$
hence, by Cauchy-Schwartz's inequality,
$$\int_{-1}^{z_1}\bigg|\frac{\de u(r,z)}{\de z}\bigg|^2dz\geq \frac{1}{z_1+1}\bigg(\int_{-1}^{z_1}\bigg|\frac{\de u(r,z)}{\de z}\bigg|dz \bigg)^2 \geq \frac{1}{z_1+1}|u(r,z_1)-u(r,-1)|^2\geq \frac{1}{C}.$$
Set $\Sigma_s=\{(r,\theta,z)\in \Sigma: r<s\}$. Then
$$\int_{\Sigma_s}\bigg|\frac{\de u}{\de z}\bigg|^2dxdydz\geq \frac{s^2}{C},$$
which together with \eqref{stimadz} implies our claim.
\end{proof}

\begin{prop}\label{lemmakey} For any $n\ge 2$ there is $\alpha_0\in (0,1/4]$ such that if  $0< \alpha\leq \alpha_0$ and $u\in W^{1,2}( D_1, S^2)$ has the form
$$u(r,\theta)=\Pi^{-1}\big(f(r)(\cos( n\theta), \sin(n\theta))\big)$$
with
$$f(1)=\alpha,\quad \lim_{r\to 0}f(r)=+\infty,\quad \min_{0\le r\le 1}f(r)=a,\quad s:= \inf \big\{r\in (0,1]:f(r)=\tfrac{1}{2}\big\}\le C_0a,$$
then
\begin{equation}\label{stimafinale}
\frac{1}{2}\int_{D_1}|\nabla u|^2 dxdy> 4\pi n+\frac{4\pi n \alpha^2}{1+\alpha^2}.
\end{equation}
\end{prop}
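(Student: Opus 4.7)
The plan is to pass to logarithmic/angular coordinates $\rho := -\log r$ and $\phi := 2\arctan f \in [0,\pi]$. Using $\sin\phi = 2f/(1+f^2)$, $\cos\phi = (1-f^2)/(1+f^2)$ in \eqref{formula1}--\eqref{duJu}, a direct substitution rewrites the energy as
\[
\frac{1}{2}\int_{D_1}|\nabla u|^2\,dxdy \;=\; \pi\int_0^\infty \bigl(\dot\phi^2 + n^2\sin^2\phi\bigr)\,d\rho,
\]
with $\phi(0) = \phi_0 := 2\arctan\alpha$, $\phi(\infty) = \pi$, $\min\phi = \phi_{\min} := 2\arctan a$, and $\phi(\rho_*) = \phi_* := 2\arctan(1/2)$ for some $\rho_* := -\log s \geq \log\frac{1}{C_0 a}$. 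Then apply the Bogomol'nyi-type identity
\[
\dot\phi^2 + n^2\sin^2\phi \;=\; (|\dot\phi| - n\sin\phi)^2 \;+\; 2n|\dot\phi|\sin\phi,
\]
which splits the energy as $E = A + (E-A)$ with $A := 2\pi n\int |\dot\phi|\sin\phi\,d\rho$ and $E - A = \pi\int(|\dot\phi|-n\sin\phi)^2\,d\rho \geq 0$.

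Second, lower-bound $A$ by the total variation of $-\cos\phi$. Picking any $\rho_1 \in [0,\rho_*]$ with $\phi(\rho_1) = \phi_{\min}$ (possible because $\phi > \phi_*$ on $(\rho_*,\infty)$ by continuity and the definition of $\rho_*$ as the largest preimage of $\phi_*$), and summing the monotone variations of $-\cos\phi$ on $[0,\rho_1]$, $[\rho_1,\rho_*]$ and $[\rho_*,\infty]$, one obtains $\mathrm{TV}(-\cos\phi) \geq 1 - \cos\phi_0 + 2\cos\phi_{\min}$, and hence
\[
A \;\geq\; 2\pi n\bigl(1 - \cos\phi_0 + 2\cos\phi_{\min}\bigr) \;=\; 4\pi n + \frac{4\pi n\,\alpha^2}{1+\alpha^2} \;-\; \frac{8\pi n\,a^2}{1+a^2},
\]
using $\sin^2(\phi_0/2) = \alpha^2/(1+\alpha^2)$ and $\sin^2(\phi_{\min}/2) = a^2/(1+a^2)$. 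So \eqref{stimafinale} reduces to the \emph{strict} excess bound
\[
\pi\int_0^\infty(|\dot\phi| - n\sin\phi)^2\,d\rho \;>\; \frac{8\pi n\,a^2}{1+a^2}.
\]

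The main obstacle is extracting this excess from the single scalar constraint $\rho_* \geq \log\frac{1}{C_0 a}$. The heuristic is that the profile saturating $E = A$ is a broken instanton $|\dot\phi| = n\sin\phi$, for which $\rho_*^{\mathrm{inst}} = \frac{1}{n}\log\frac{\alpha}{2a^2}$; for $\alpha \leq \alpha_0$ small enough (depending on $n$ and $C_0$, the threshold being $\alpha^{n-1} < 2/C_0^n$) one has $\log\frac{1}{C_0 a} > \rho_*^{\mathrm{inst}}$, so any admissible $\phi$ must use ``extra $\rho$-time'' of order $(1-\tfrac{1}{n})\log\frac{1}{\alpha}$ beyond the instanton. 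To quantify this, I would apply Cauchy--Schwarz on the ascending interval $[\rho_1,\rho_*]$:
\[
\int_{\rho_1}^{\rho_*}(|\dot\phi| - n\sin\phi)^2\,d\rho \;\geq\; \frac{\bigl(\int_{\rho_1}^{\rho_*}(|\dot\phi| - n\sin\phi)\,d\rho\bigr)^2}{\rho_* - \rho_1},
\]
and estimate the numerator by combining the upper bound $\int |\dot\phi|\,d\rho \leq 2(\phi_* - \phi_{\min}) + \mathrm{osc}$ with the pointwise lower bound $n\sin\phi \geq n\sin\phi_{\min}$ valid on $\{\phi \leq \pi/2\}$. The argument branches: in the case where $\phi$ exceeds $\phi_*$ somewhere on $[0,\rho_*]$, the total-variation lower bound on $A$ in step two strictly improves by a definite amount, already closing the argument; in the ``simple'' case where $\phi \leq \phi_*$ throughout $[0,\rho_*]$, one further splits on whether $\rho_* - \rho_1$ is comparable to $1/(na)$ or smaller, using Cauchy--Schwarz on $\dot\phi$ in one regime and the plateau bound $n^2\sin^2\phi_{\min}(\rho_*-\rho_1) \asymp n^2 a^2(\rho_*-\rho_1)$ in the other. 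The technical heart of the proof is carrying out this case analysis carefully and fixing $\alpha_0$ small enough, depending on $n$ and the constant $C_0$ from Lemma~\ref{lemmasigma}, to ensure the excess strictly dominates the deficit $8na^2/(1+a^2)$ in every case.
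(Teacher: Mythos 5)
Your reduction is sound and is in fact the same as the paper's Steps 1--3 written in exponential coordinates: the identity $E=\pi\int(\dot\phi^2+n^2\sin^2\phi)\,d\rho$, the Bogomol'nyi splitting $E=A+(E-A)$, and the total-variation bound $A\ge 4\pi n+\tfrac{4\pi n\alpha^2}{1+\alpha^2}-\tfrac{8\pi na^2}{1+a^2}$ correspond exactly to \eqref{stima5}, \eqref{stima7} and \eqref{E-2A}, so the problem is correctly reduced to the strict excess bound $\pi\int(|\dot\phi|-n\sin\phi)^2\,d\rho>\tfrac{8\pi na^2}{1+a^2}$, i.e.\ to \eqref{stima8}. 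The gap is that this excess bound --- the entire content of the paper's Steps 4--5 --- is only sketched, and the sketch as written does not close. The paper proves it by explicitly solving the constrained (obstacle) minimization: the optimal profile is $Ar^n+Br^{-n}$ off a contact set and is \emph{identically equal to} $a$ on an interval $[t_0,\tau_0]$ with $\log(\tau_0/t_0)\to\infty$ as $\alpha\to 0$, and the excess $n^2a^2\log(\tau_0/t_0)$ is harvested precisely on that plateau. Your substitutes for this computation do not work as stated. The Cauchy--Schwarz bound $\big(\int(|\dot\phi|-n\sin\phi)\big)^2/(\rho_*-\rho_1)$ degenerates whenever the path is instanton-like ($|\dot\phi|=n\sin\phi$), and ruling that out requires an \emph{upper} bound on the $\rho$-time the transitions $\phi_{\min}\to\phi_*$ and $\phi_0\to\phi_{\min}$ are allowed to consume before excess must appear --- which is exactly what the explicit formulas for $t_0$ and $\tau_0$ (\eqref{t0}, \eqref{tau}) provide and what your sketch never establishes. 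The ``plateau bound'' $n^2\sin^2\phi_{\min}(\rho_*-\rho_1)$ is not a valid pointwise lower bound for $(|\dot\phi|-n\sin\phi)^2$ off the actual plateau: if you try to globalize it by writing $(|\dot\phi|-n\sin\phi)^2\ge n^2\sin^2\phi_{\min}-2n|\dot\phi|\sin\phi+\dot\phi^2$ and controlling the cross term by total variation, the transition regions cost a fixed amount of order $n(\cos\phi_{\min}-\cos\phi_*)\approx \tfrac{4n}{5}$, which swamps the gain $n^2a^2\log\tfrac{1}{C_0a}\to 0$; the $\mathrm{TV}$ contribution to $A$ cancels this loss only up to that same constant, leaving nothing.

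Two further points in the case analysis are also off. First, your ``Case A'' ($\phi$ exceeds $\phi_*$ somewhere on $[0,\rho_*]$) does not by itself improve the total-variation lower bound by a definite amount: if the minimum of $\phi$ is attained after the excursion and the excursion barely exceeds $\phi_*$, the minimal total variation is unchanged. The paper avoids this by using the higher threshold $f=1$ in Step 1, for which an excursion yields a gain of order $2\pi n$. Second, there is no a priori upper bound on $\rho_1$: the descent from $\phi_0$ to $\phi_{\min}$ may absorb almost all of $\rho_*$, in which case $\rho_*-\rho_1$ is as short as the pure instanton ascent and your Cauchy--Schwarz on $[\rho_1,\rho_*]$ yields zero; the excess must then be extracted from the descent, where $\sin\phi\le\sin\phi_0=O(\alpha)$ and a separate argument (again relying on the obstacle $\phi\ge\phi_{\min}$ and a plateau analysis) is needed. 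Similarly, highly oscillatory profiles with $|\dot\phi|=n\sin\phi$ a.e.\ have zero excess and arbitrarily large $\rho_*$; they are excluded only because their extra total variation inflates $A$, an accounting your two-step structure ($A$ bounded once by the minimal variation, then excess estimated independently) does not perform. In short: the framework is right and equivalent to the paper's, but the quantitative heart of the proof is missing, and filling it in essentially forces you back to the paper's explicit solution of the constrained one-dimensional variational problem.
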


Before proving this key proposition, let us notice that it completes the proof of Theorem 1. Indeed we can apply it to $u\big|_{D_1\times\{z_1\}}$ (hence $f(r, z_1)$ will play the role of $f(r)$ in Proposition \ref{lemmakey}) and \eqref{stimafinale} yields $\psi(z_1)<0$.

\medskip

\noindent \emph{Proof of Proposition \ref{lemmakey}.}
In the following several formulas will be more transparent if we write $b$ instead of $1/2$, but the reader should keep in mind that $b$ is fixed. We should also remember that $0< a\le\alpha$ and $\alpha$ is small. Moreover we will often use \eqref{E>A} and Lemma \ref{lemmaarea}.

\medskip

\noindent\emph{Step 1.} We can easily estimate 
\begin{equation}\label{stima5}
E(u,D_s)=\frac{1}{2}\int_{D_s}|\nabla u|^2dxdy\ge  A(u, D_s)=4\pi n -\frac{4\pi n b^2}{1+b^2}.
\end{equation}
To estimate $E(u,D_1\backslash D_s)$ we can assume that $f\le 1$ in $D_1\backslash D_s$. Indeed if $f(r_0,z_1)=1$ for some $r_0\in(s,1)$, we clearly have
\begin{eqnarray*}
E(u,D_1\setminus D_s)&=&E(u,D_{r_0}\backslash D_s)+E(u,D_1\backslash D_{r_0}) \geq A(u,D_{r_0}\backslash D_s)+A(u,D_1\backslash D_{r_0})\\
&\geq&\bigg(2\pi n - \frac{4\pi n b^2}{1+b^2}\bigg) + \bigg(2\pi n- \frac{4\pi n \alpha^2}{1+\alpha^2}\bigg).
\end{eqnarray*}
This and \eqref{stima5} imply \eqref{stimafinale} for $\alpha$ small enough. From now on we shall assume that $f\le 1$ in $D_1\backslash D_s$.

\medskip

\noindent\emph{Step 2.}
Pick any $\tilde s\in (s,1]$ such that $f(\tilde s)=a$. There exists a function $v\in W^{1,2}(D_1\backslash D_s)$
of the form
\begin{equation}\label{vh}
v(r,\theta)=\Pi^{-1}\big(h(r)(\cos( n\theta), \sin(n\theta))\big)
\end{equation}
for some $h\in W^{1,2}([s,1])$ which minimizes the energy
\begin{equation}\label{ev}
E(v,D_1\backslash D_s)=\frac{1}{2}\int_{D_1\backslash D_s}|\nabla v|^2dxdy=4\pi\int_s^1\frac{|h'|^2+\frac{n^2}{r^2}h^2}{(1+h^2)^2} rdr
\end{equation}
among all functions $\tilde v\in W^{1,2}(D_1\backslash D_s)$ (with corresponding $\tilde h\in W^{1,2}([s,1])$ as in \eqref{vh}) satisfying $a\le \tilde h\le 1$, $\tilde h(s)=b$, $\tilde h(\tilde s)=a$ and $\tilde h(1)=\alpha$. Indeed the functional in \eqref{ev} is coercive and the imposed conditions (which are convex) are preserved under the weak convergence in $W^{1,2}$. 

We claim that $h'\le 0$ in $[s,\tilde s]$ and $h'\ge 0$ in $[\tilde s,1]$. Indeed, if for points $s\le s_1<s_2<s_3\le \tilde s$ we have $h(s_1)=h(s_3)<h(s_2)$, we can modify $h$ by setting $h\equiv h(s_1)$ on $[s_1,s_3]$. This would decrease the energy, as one can see by inspecting the right-hand side of \eqref{ev}, using that the function $h\to h^2/(1+h^2)^2$ is strictly increasing for $h\in [0,1]$. One can do the same in $[\tilde s,1]$.

Since $E(u,D_1\backslash D_s)\ge E(v,D_1\backslash D_s)$, it is enough to estimate the energy of $v$. We have
\begin{equation}\label{stima7}
A(v,D_1\backslash D_s)= A(v,D_{\tilde s}\backslash D_s)+ A(v,D_1\backslash D_{\tilde s})= \frac{4\pi nb^2}{1+b^2}-\frac{8\pi n a^2}{1+a^2} +\frac{4\pi n\alpha^2}{1+\alpha^2},
\end{equation}
and the proof is complete if we can prove that for $\alpha$ small enough and $a\in (0,\alpha]$ we have

\begin{equation}\label{stima8}
(E-A)(v, D_1\backslash D_s)>\frac{8\pi n a^2}{1+a^2}.
\end{equation}

\medskip

\noindent\emph{Step 3.} We now reduce the proof of \eqref{stima8} to a simpler problem. From \eqref{formula1}, \eqref{formula2} and  \eqref{duJu} we infer
\begin{equation}\label{E-2A}
\begin{split}
(E-A)(v,D_1\backslash D_s)&=
\int_{D_1\backslash D_s}\frac{\big(2|h'|^2+\frac{2n^2}{r^2}h^2-4\frac{n}{r}|h'|h\big)}{(1+h^2)^2}dxdy\\
&=\int_{D_1\backslash D_s}\frac{2\big(|h'|-\frac{n}{r}h\big)^2}{(1+h^2)^2}dxdy\\
&=4\pi \int_s^1\frac{(|h'(r)|-\frac{n}{r} h(r))^2}{(1+h(r)^2)^2} rdr.
\end{split}
\end{equation}
Since $0\leq h\leq 1$ on $D_1\backslash D_s$, we have $1\leq (1+h^2)^2\leq 4$ in \eqref{E-2A}. Then, considering what we know about $v$ and $h$, to estimate $(E-A)(v,D_1\backslash D_s)$  up to a multiplicative constant it is enough to estimate the infimum of
$$I(g)=\int_s^1\Big(|g'(r)|-\frac{n}{r} g(r)\Big)^2 rdr $$
over
$$\M{C}:=\big\{g\in W^{1,2}([s,1]): g(s)=b,\; g(1)=\alpha,\; g(\tilde s)=a,\; g'\le 0\text{ on }[s,\tilde s],\;g'\ge 0\text{ on }[\tilde s,1]\big\}.$$
Since $I$ is coercive on $\M{C}$ (because $a\le g\le b=1/2$ for  $g\in \M{C}$) and $\M{C}$ is convex and closed with respect to the $W^{1,2}$-topology, it is possible to find a function $g_0$ which minimizes $I$ over $\M{C}$. Since $h\in \M{C}$
\begin{equation}\label{hg0}
(E-A)(v,D_1\backslash D_s)\ge \pi I(h)\ge \pi I(g_0),
\end{equation}
and it remains to estimate $I(g_0)$.

\medskip

\noindent\emph{Step 4.} 
We shall now explicitly compute $g_0$. Consider the set
$$\M{C}_1:=\big\{g\in W^{1,2}([s,\tilde s]): g(s)=b,\; g(\tilde s)=a, \;g'\le 0 \}.$$
Then $g_0 \big|_{[s,\tilde s]}\in \M{C}_1$ and it minimizes
$$\tilde I(g):=\int_s^{\tilde s}\Big( g'(r)+\frac{n}{r}  g(r)\Big)^2 rdr$$
over $\M{C}_1$, where we used that $| g'|=- g'$ for $g\in \M{C}_1$. The functional $\tilde I$ is strictly convex over $\M{C}_1$, hence if we can find a critical point $\tilde g$ of $\tilde I$ in $\M{C}_1$, then it has to be the unique minimizer $g_0\big|_{[s,\tilde s]}$. By a critical point in $\M{C}_1$, we mean a function $\tilde g\in \M{C}_1$ such that
\begin{equation}\label{gtilde}
\frac{d}{d\ve}\tilde I(\tilde g+\ve\varphi)\Big|_{\ve=0^+}:=\lim_{\ve\downarrow 0}\frac{I(\tilde g+\ve\varphi)-I(\tilde g)}{\ve}\ge 0, \quad \text{for any }\varphi:= g-\tilde g,\; g\in \M{C}_1.
\end{equation}
The inequality in \eqref{gtilde} is due to the fact that $\M{C}_1$ is not a vector space and $\tilde g$ might belong to $\de\M{C}_1$.

For $t>s$ to be chosen, consider the function
$$\eta_t(r)= A_tr^n+\frac{B_t}{r^n}, \quad A_t=\frac{at^n-bs^n}{t^{2n}-s^{2n}},\quad B_t=\frac{s^n t^n(b t^n-a s^n)}{t^{2n}-s^{2n}},$$
which satisfies $\eta_t(s)=b$, $\eta_t(t)=a$.
There is exactly one value $t_0>s$ for which $\eta'_{t_0}(t_0)=0$. Indeed any such $t_0$ satisfies
\begin{equation}\label{t02n}
t_0^{2n}=\frac{B_{t_0}}{A_{t_0}}=\frac{s^nt_0^n(bt_0^n-as^n)}{at_0^n-bs^n}\quad \text{if }at_0^n-bs^n>0,
\end{equation}
hence
\begin{equation}\label{t0b}
at_0^{2n}-2bs^nt_0^n+as^{2n}=0.
\end{equation}
Then we compute
$$t_{0\pm}^n=\bigg(\frac{b}{a}\pm\sqrt{\Big(\frac{b^2}{a^2}-1\Big)}\bigg)s^n=\frac{b}{a}\bigg(1\pm\sqrt{\Big(1-\frac{a^2}{b^2}\Big)}\bigg)s^n. $$
Then, since we want $t_0>s$, we have
\begin{equation}\label{t0}
t_0^n=t_{0+}^n=\frac{b}{a}\bigg(1+\sqrt{\Big(1-\frac{a^2}{b^2}\Big)}\bigg)s^n=\frac{b}{a}\bigg(2-\frac{1}{2}\frac{a^2}{b^2}+o(a^2/b^2))\bigg)s^n,
\end{equation}
with $\frac{o(a^2/b^2)}{a^2/b^2}\to 0$ as $a/b\to 0$. This way also the condition $at_0^n-bs^n>0$ in \eqref{t02n} is satisfied.

If $t_0\ge \tilde s$ set $\tilde g=\eta_{\tilde s}$. Then $\eta_{\tilde s}'\le 0$ on $[s,\tilde s]$. Indeed $\eta'_{\tilde s}(\tilde s)\le 0$, since this is equivalent to $a\tilde s^{2n}-2bs^n\tilde s^n+as^{2n}\le 0$, which follows from \eqref{t0b} and $t_{0-}\le s< \tilde s\le t_{0+}$. But $\eta_{\tilde s}'(r)\le 0$ is equivalent to $r^{2n}\le B_{\tilde s}/A_{\tilde s}$ and we have proven this for $r=\tilde s$, hence it also holds for $0<r<\tilde s$. 

If $t_0< \tilde s$, set $\tilde g=\eta_{t_0}$ on $[s,t_0]$ and $\tilde g\equiv a$ on $[t_0,\tilde s]$. Again it is clear that $\tilde g'\le 0$.

In both cases we have $\tilde g\in \M{C}_1$ and we claim that $\tilde g$ satisfies \eqref{gtilde}.
In fact, assuming first $t_0<\tilde s$, we have for $\varphi$ as in \eqref{gtilde}
\begin{equation*}
\begin{split}
\frac{d}{d\ve}\tilde I(\tilde g+\ve\varphi)\Big|_{\ve=0^+}&=2\int_{s}^{\tilde s}\Big(\tilde g' +\frac{n}{r}\tilde g \Big)\Big(\varphi'+\frac{n}{r}\varphi\Big) rdr\\ 
&=2\int_s^{t_0}\Big(-(r\tilde g')' +\frac{n^2}{r}\tilde g\Big)\varphi dr +2\int_{t_0}^{\tilde s}\frac{n^2}{r}\tilde g\varphi dr,
\end{split}
\end{equation*}
where we used the condition $\tilde g'(t_0)=0$ in the integration by parts. The last integral is non-negative since $\varphi\ge 0$ in $[t_0,\tilde s]$, being $\tilde g=a$ and $g\ge a$ in that interval. As for the first integral on the right-hand side, it vanishes, since for $t>0$
\begin{equation}\label{el}
-(r \eta_t'(r))' +\frac{n^2}{r}\eta_t(r)=0\quad \text{for }r\in (0,\infty).
\end{equation}
If $t_0\ge \tilde s$, \eqref{gtilde} follows at once from \eqref{el}. Then \eqref{gtilde} is proven and $\tilde g=g_0\big|_{[s,\tilde s]}$.

An analogous procedure can be done on $[\tilde s,1]$, assuming $\tilde s<1$ (if $\tilde s=1$, then $a=\alpha$ and, setting $\tau_0=1$, one has $g_0 \equiv a=\alpha$ on $[t_0,\tau_0]$; then jump to Step 5) and minimizing
$$\bar I( g):=\int_{\tilde s}^1\Big( g'(r)-\frac{n}{r}  g(r)\Big)^2 rdr$$
over
$$\M{C}_2:=\big\{g\in W^{1,2}([\tilde s,1]): g(\tilde s)=a,\; g(1)=\alpha, \;g'\ge 0 \}.$$
We consider for $0<\tau< 1$
$$\zeta_\tau(r)= A_\tau'r^n+\frac{B_\tau'}{r^n},\quad A_\tau'=\frac{\alpha - a \tau^n}{1-\tau^{2n}},\;B_\tau'=\frac{\tau^n(a-\alpha\tau^n)}{1-\tau^{2n}},$$
so that $\zeta_\tau (\tau)=a$, $\zeta_\tau(1)=\alpha$, and we compute $\tau_0\le 1$ such that  $\zeta'_{\tau_0}(\tau_0)= 0$. This gives $\tau_0^{2n}= \frac{B_{\tau_0}'}{A_{\tau_0}'}$, hence
\begin{equation}\label{tau}
\tau_{0\pm}^{n}=\frac{\alpha}{a}\bigg(1\pm\sqrt{1-\frac{a^2}{\alpha^2}}\bigg),\quad \tau_0^n=\tau_{0-}^n=\frac{\alpha}{a}\bigg(1-\sqrt{1-\frac{a^2}{\alpha^2}}\bigg), 
\end{equation}
where we chose the minus sign because $\tau_0 \le 1$ (simple algebraic computations show that $\tau_{0-}\le 1$, with equality if and only if $a=\alpha$).
As before if $\tau_0\le \tilde s$ we set $\bar g=\zeta_{\tilde s}$, if $\tilde s <\tau_0<1$ we set  $\bar g=\zeta_{\tau_0}$ on $[\tau_0,1]$ and $\bar g\equiv a$ on $[\tilde s, \tau_0]$, if $\tau_0=1$ we set $\bar g\equiv a=\alpha$ on $[\tilde s,1]$. Then again $\bar g$ minimizes $\bar I$ over $\M{C}_2$, hence $\bar g=g_0\big|_{[\tilde s, 1]}$.

\medskip

\noindent\emph{Step 5.} We have completely determined $g_0$ (depending on $a,\alpha, s$ and $\tilde s$ only). In particular we have proven that $g_0\equiv a$ on $[t_0,\tau_0]$.

We now prove that $t_0/\tau_0\to 0$ as $\alpha\to 0$ and complete the proof of \eqref{stima8}. First of all notice that \eqref{t0} and Lemma \ref{lemmasigma} imply (keeping in mind that $b=1/2$)
\begin{equation}\label{t02}
t_0^n\le Ca^{n-1}.
\end{equation}
To estimate $\tau_0$ we go back to \eqref{tau} and write $\beta=(a/\alpha)^2\in (0,1]$. We claim that
\begin{equation}\label{tau02}
\tau_0^n=\frac{1}{\sqrt{\beta}}\big(1-\sqrt{1-\beta}\big)\ge \frac{\sqrt{\beta}}{C}=\frac{1}{C}\frac{a}{\alpha},
\end{equation}
where $C$ is fixed. Indeed this reduces to prove that
$$\varphi(\beta):=\frac{1}{\beta}\big(1-\sqrt{1-\beta}\big)\ge \frac{1}{C}\quad \text{for }\beta\in (0,1],$$
which is obvious since $\varphi>0$ in $(0,1]$ and $\lim_{\beta\downarrow 0}\varphi(\beta)=\frac{1}{2}$.
Since $n\ge 2$, from \eqref{t02} and \eqref{tau02} we infer
\begin{equation}\label{n2}
\frac{t_0}{\tau_0}\le C\big(\alpha a^{n-2}\big)^{\frac{1}{n}}\to 0\quad \text{as }\alpha\to 0.
\end{equation}
Then we have with \eqref{hg0}
\begin{equation*}
(E-A)(v,D_1\backslash D_s)\geq \pi I(g_0)\geq \pi\int_{t_0}^{\tau_0}\Big(\frac{na}{r}\Big)^2  r dr \ge \pi n^2 a^2\log\frac{\tau_0}{t_0}=\frac{a^2}{o(1)},
\end{equation*}
with $o(1)\to 0$ as $\alpha\to 0$, and \eqref{stima8} holds true if $0<\alpha\le \alpha_0=\alpha_0(n)$.
\hfill $\square$

\section{Proof of Theorem \ref{trm2}}

Theorem \ref{trm2} can be proven essentially as Theorem \ref{trm1} after fixing a minimal connection. Here instead we show how to deduce it from Theorem \ref{trm1}, to emphasize that the two theorems are equivalent (and similarly one could also deduce Theorem \ref{trm1} from Theorem \ref{trm2}).

\medskip

Let $u$ be a minimizer of $F(\cdot,B_2)$ in $\M{A}^{(n)}_{\tilde u_0}$. It follows from \cite{hlp} that $u|_{\overline B_1}$ is smooth away from the $z$-axis. Now fix $L$ minimizing the $1$-dimensional mass in the set of $1$-dimensional currents satisfying $(\de L)\res \de B_2=0$ and \eqref{bordo0}. Such a minimizer exists because the above set is closed with respect to the weak convergence of currents.  We first claim that $L=\pm n \cur{I}$ for an $\M{H}^1$-measurable set $I\subset z-axis\cap B_1$, so that $T:=\M{G}(u)+L\times \cur{S^2}\in \M{A}^{(n)}(B_2,S^2)$. Indeed it follows from the generalization of Lemma 4.1 of \cite{hlp} to the case $n\ge 2$ (see  the proof of Lemma \ref{IT} above), that $L=\pm n\cur{I}$ for an $\M{H}^1$-measurable set $I\subset z-axis\cap B_2$, but since $u|_{\overline B_2\setminus\overline B_1}\in C^\infty$ and $(\de L)\res \de B_2=0$, it follows from \eqref{bordo0} that $\supp L \cap (\overline B_2\setminus \overline B_1)=\emptyset$ by the constancy theorem. 

Now we prove that $T$ minimizes $\M{D}(\cdot, B_2)$ in $\M{A}^{(n)}_T$.
Define
\begin{equation*}
\begin{split}
\tilde T&=T-  T\res((B_2\setminus \overline B_1)\times S^2)+ T_0\res((B_2\setminus \overline B_1)\times S^2)\\
&=T-  \M{G}(\tilde u_0|_{B_2\setminus \overline B_1})+ T_0\res((B_2\setminus \overline B_1)\times S^2),
\end{split}
\end{equation*}
where $T_0$ is as in Theorem \ref{trm1}. From
\begin{equation*}
\begin{split}
(\de \M{G}(\tilde u_0|_{B_2\setminus \overline B_1}))\res (B_2\times S^2)&=\M{G}({\tilde u_0|_{\de B_1}}) +n(\delta_{(0,0,-1)}-\delta_{(0,0,1)})\times \cur{S^2}\\
&=(\de (T_0\res(B_2\setminus \overline B_1)\times S^2))\res( B_2\times S^2)
\end{split}
\end{equation*}
and $(\de T) \res (B_2\times S^2)=0$, we infer  $(\de \tilde T) \res B_2\times S^2=0$, hence $\tilde T$ belongs to $\M{A}^{(n)}_{T_0}(B_2,S^2)$, since we can write it as $\tilde T=\M{G}(\tilde u)+L\times \cur{S^2}$ with $\tilde u:=u\chi_{B_1}+u_0\chi_{B_2\setminus B_1}\in H^1(B_2),$
and the condition \eqref{bordo0} is satisfied.
Clearly $\tilde T$ minimizes $\M{D}(\cdot,B_2)$ in $\M{A}^{(n)}_{T_0}$. Then by Theorem \ref{trm1} $\tilde T=T_0$, hence $u=\tilde u_0$.
\hfill$\square$

\subsubsection*{Aknowledgements}

I wish to warmly thank Prof. Petru Mironescu for useful remarks on the paper and Prof. Tristan Rivi\`ere for many interesting  discussions on the topic of relaxed energies.


\begin{thebibliography}{2}
\small
\bibitem{bbc} \textsc{F. Bethuel, H. Brezis, J-M. Coron,} \emph{Relaxed energies for harmonic maps}, in: \emph{Progress in nonlinear differential equations and their applications}, vol. 4, Birh\"auser (1990), 37-52 (Paris 1988).
\bibitem{bc} \textsc{H. Brezis, J-M. Coron,} \emph{Large solutions for harmonic maps in two dimensions}, Comm. Math. Phys., \textbf{92} (1983), 203-215.
\bibitem{bcl} \textsc{H. Brezis, J-M. Coron, E. Lieb,} \emph{Harmonic maps with defects}, Comm. Math. Phys., \textbf{107} (1992), 649-705.
\bibitem{evans} \textsc{L. C. Evans,} \emph{Partial regularity for stationary harmonic maps into spheres}, Arch. Rat. Mech. Anal. \textbf{116} (1991), 101-163.
\bibitem{eg} \textsc{L. C. Evans, R. F. Gariepy,} \emph{Measure theory and fine properties of functions}, CRC Press, New York (1992).
\bibitem{federer} \textsc{H. Federer,} \emph{Geometric measure theory}, Springer-Verlag, New York, 1969.
\bibitem{gmsa} \textsc{M. Giaquinta, G. Modica, J. Sou\v cek,} \emph{Cartesian currents, weak diffeomorphisms and existence theorems in nonlinear elasticity}, Arch. Rat. Mech. Anal. \textbf{106} (1989), 97-159.
\bibitem{gmsb} \textsc{M. Giaquinta, G. Modica, J. Sou\v cek,} \emph{Cartesian currents and variational problems for mappings into spheres}, Ann. Scuola Norm. Sup. Pisa Serie IV \textbf{16} (1989), 393-485.
\bibitem{gms} \textsc{M. Giaquinta, G. Modica, J. Sou\v cek,} \emph{The Dirichlet energy of mappings with values into the sphere}, Manuscripta Math. \textbf{65} (1989), 489-507.
\bibitem{GMS2} \textsc{M. Giaquinta, G. Modica, J. Sou\v cek,} \emph{Cartesian currents in the calculus of variations. I. Cartesian currents}, Springer-Verlag, Berlin, 1998.
\bibitem{hkl} \textsc{R. Hardt, D. Kinderlehrer, F-H. Lin,} \emph{The variety of configurations of static liquid crystals}, in Progress in nonlinear partial differential equations and applications, Vol. 4 pag. 115-132, Birkh\"auser 1990.
\bibitem{hl} \textsc{B. Hardt, F-H. Lin,} \emph{A remark on $H^1$ mappings}, Manuscripta Math. \textbf{56} (1986), 1-10.
\bibitem{hlp} \textsc{R. Hardt, F-H. Lin, C-C. Poon,} \emph{Axially symmetric harmonic maps minimizing a relaxed energy}, Comm. Pure Appl. Math., \textbf{45} (1992), 417-459.
\bibitem{hel} \textsc{F. H\'elein,} \emph{Harmonic maps, conservation laws and moving frames,} second edition, Cambridge University press (2002).
\bibitem{lu} \textsc{S. Luckhaus,} \emph{Partial H\"older continuity for minima of certain energies among maps into a Riemannian manifold}, Indiana Univ. Math. J. \textbf{37} (1988), 349-367.
\bibitem{riviere} \textsc{T. Rivi\`ere,} \emph{Everywhere discontinuous harmonic maps into spheres}, Acta Math., \textbf{175} (1995), 197-226.
\bibitem{su} \textsc{R. Schoen, K. Uhlenbeck,} \emph{Regularity of minimizing harmonic maps into the sphere}, Invent. Math., \textbf{78}, 89-100, 1984.
\bibitem{dz} \textsc{D. Zhang,} \emph{The existence of nonminimal regular harmonic maps from $B^3$ into $S^2$}, Ann. Scuola Norm. Sup. Pisa Ser. IV, \textbf{16} (1989), 355-365.
\end{thebibliography}
\end{document}